\newtheorem{theorem}{Theorem}[section]
\newtheorem{lemma}[theorem]{Lemma}
\newtheorem{thm}[theorem]{Theorem}
\newtheorem{cor}[theorem]{Corollary}
\newtheorem{conj}[theorem]{Conjecture}
\theoremstyle{definition}
\newcommand{\dd}{\partial}
\newcommand{\cP}{\mathcal{P}}
\newcommand{\bR}{\ensuremath{\mathbb{R}}}
\newcommand{\bP}{\ensuremath{\mathbb{P}}}
\newcommand{\bE}{\ensuremath{\mathbb{E}}}
\def\komment#1{}
\let\komment=\footnote
\title{Random $4$-regular graphs have $3$-star decompositions\\ asymptotically almost surely}
\author{
Michelle Delcourt
\thanks{Corresponding Author, Department of Mathematics,
University of Illinois, Urbana, Illinois 61801, USA {\tt delcour2@illinois.edu}. Research supported by NSF Graduate Research Fellowship DGE 1144245.}
\and
Luke Postle
\thanks{Combinatorics and Optimization Department,
University of Waterloo, Waterloo, Ontario N2L 3G1, Canada {\tt lpostle@uwaterloo.ca}. Partially supported by NSERC
under Discovery Grant No. 2014-06162.}}
\begin{document}
\maketitle

\begin{abstract}
\noindent Bar\'{a}t and Thomassen conjectured in 2006 that the edges of every planar 4-regular 4-edge-connected graph can be decomposed into copies of the star with 3 leaves.  Shortly afterward, Lai constructed a counterexample to this conjecture.  Using the small subgraph conditioning method of Robinson and Wormald, 
we prove that a random 4-regular graph has an $S_3$-decomposition asymptotically almost surely, provided the number of vertices is divisible by 3.
\end{abstract}
{\bf Keywords:} Configuration Model, Random Regular Graphs, Small Subgraph Conditioning Method, Star Decompositions.

\section{Introduction}
A question that has garnered much study is whether the edges of a graph $G$ can be decomposed into copies of a small fixed subgraph, say $F$. Of course, some natural divisibility conditions arise for such a decomposition, namely that $e(F)$ must divide $e(G)$. Kotzig observed~\cite{k} that if $G$ is connected and $e(G)$ is even, then $G$ decomposes into copies of $S_2$, the star with $2$ leaves. What happens for larger $F$; in particular, are there natural conditions when $F$ is isomorphic to the $S_3$, the star with $3$ leaves?  
Not much was known about this problem until Thomassen's breakthrough results~\cite{T3} on the weak 3-flow conjecture. In particular, we note the following theorem which follows from a more general theorem of Lov\'{a}sz, Thomassen, Wu, and Zhang~\cite{T13}.

\begin{thm}\label{ThomStar}
If $F\simeq S_k$, the star with $k$ leaves, and $G$ is a $d$-edge-connected graph such that $k$ divides $e(G)$ and $2\le k\le \lceil d/2 \rceil$, then the edge set of $G$ decomposes into copies of $F$.
\end{thm}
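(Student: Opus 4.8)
The plan is to recast the problem as one about orientations: I would show that $G$ has an $S_k$-decomposition precisely when $G$ admits an orientation in which every in-degree is divisible by $k$, and then obtain such an orientation from the work of Lov\'asz, Thomassen, Wu, and Zhang~\cite{T13} on modulo-$k$ orientations (which itself rests on Thomassen's method for the weak $3$-flow conjecture~\cite{T3}).

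For the reduction, suppose $D$ is an orientation of $G$ with $d^-_D(v)\equiv 0\pmod k$ for every vertex $v$. At each $v$ I would split the $d^-_D(v)$ edges entering $v$ into $d^-_D(v)/k$ blocks of size $k$; since $G$ is simple, each block spans a copy of $S_k$ centred at $v$, and since every edge of $G$ is directed towards exactly one of its endpoints, the blocks taken over all $v$ partition $E(G)$ into copies of $S_k$. (Conversely, orienting each edge of an $S_k$-decomposition towards the centre of its star produces an orientation with all in-degrees divisible by $k$, so the two properties are equivalent; only this direction is needed.) Thus it suffices to find an orientation $D$ with $d^-_D(v)\equiv 0\pmod k$ for all $v$, equivalently --- writing $c(v)$ for the residue of $d(v)$ modulo $k$ --- an orientation with $d^+_D(v)\equiv c(v)\pmod k$ for all $v$.

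Finally I would apply~\cite{T13}. The function $c$ meets the obviously necessary consistency condition $\sum_{v}c(v)\equiv\sum_{v}d(v)=2e(G)\equiv e(G)\pmod k$, the last congruence using $k\mid e(G)$; and the results of~\cite{T13} assert that once the edge-connectivity is large enough (at most $2k-1$ for the pattern needed here), a graph admits an orientation realizing any in-degree residue pattern modulo $k$ satisfying this consistency condition. Since $k\le\lceil d/2\rceil$ is equivalent to $d\ge 2k-1$, the hypotheses supply exactly the required edge-connectivity, and we obtain the orientation, hence the decomposition. The whole difficulty sits in this last step: the reduction is elementary, but proving that a $(2k-1)$-edge-connected graph can realize an arbitrary admissible in-degree residue pattern modulo $k$ is the deep ingredient, requiring the contraction and splitting-off arguments of~\cite{T3,T13}. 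What one must check by hand is only the edge-connectivity bookkeeping and the consistency of $c$, both of which are immediate.
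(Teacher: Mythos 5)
Your reduction from $S_k$-decompositions to orientations in which every in-degree is $\equiv 0 \pmod k$ (equivalently, every out-degree is $\equiv d(v) \pmod k$) is correct, and your overall plan matches the paper, which supplies no argument of its own and merely notes that the theorem ``follows from a more general theorem'' of Lov\'asz, Thomassen, Wu, and Zhang~\cite{T13}.

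The problem is the bridge to~\cite{T13}. You assert that~\cite{T13} yields, in every $(2k-1)$-edge-connected graph, an orientation realizing \emph{any} prescribed in-degree residue pattern modulo $k$ subject only to the consistency condition. No such theorem exists, and it cannot: the pattern $d^+_D(v)\equiv d^-_D(v)\pmod k$ at every vertex is one admissible pattern, and realizing it in all $(2k-1)$-edge-connected graphs would settle Jaeger's circular flow conjecture (Conjecture~\ref{J1} of this paper) outright, which remains open. What~\cite{T13} actually establishes for general $\mathbb{Z}_k$-boundaries (and for mod-$k$-orientations) requires edge-connectivity $3k-3$, as this paper itself records; for $k\ge 3$ that is strictly larger than $2k-1$. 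So Theorem~\ref{ThomStar}'s $(2k-1)$ threshold is not obtained by specializing a blanket ``$(2k-1)$-edge-connected realizes any boundary'' statement to the target $c(v)=d(v)\bmod k$ --- it reflects a genuine special feature of the star-decomposition target, and extracting it requires identifying the precise corollary of~\cite{T13} (or the Bar\'at--Thomassen-style reduction it feeds into) rather than the general group-connectivity theorem. As written, the proof is circular at its key step: it quotes a version of~\cite{T13} strong enough to make everything else trivial, but that version is false. Everything you do verify by hand --- the translation $k\le\lceil d/2\rceil \iff d\ge 2k-1$ and the consistency check $\sum_v c(v)\equiv 2e(G)\equiv e(G)\equiv 0\pmod k$ --- is fine.
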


In fact, Theorem~\ref{ThomStar} is tight for $k\ge 3$. To see this, first note that if $k > d$, then $K_k$ is a $d$-edge-connected graph with no $S_k$ decomposition.  For $k \le d$ with $k\ge 3$ and $k> \lceil d/2 \rceil$, consider $k$ copies of $K_d$ with edges added so that the resulting graph $G$ is $d$-regular and $d$-edge-connected. If there existed an \emph{$S_k$-decomposition} of $G$ (a decomposition of the edges of $G$ into copies of $S_k$), then because $k > d/2$, such a decomposition would naturally partition the vertices into $\frac{d}{2k} v(G)= \frac{d^2}{2}$ centers of the stars and $\frac{2k-d}{2k}v(G) = \frac{d(2k-d)}{2}$ non-centers. However, the non-centers must form an independent set, and thus, there are at most $k$ of them, the desired contradiction (because $k<\frac{d(2k-d)}{2}$ when $2k-d\ge 2$).   
\begin{figure}
	\centering 
		\includegraphics[scale=0.575]{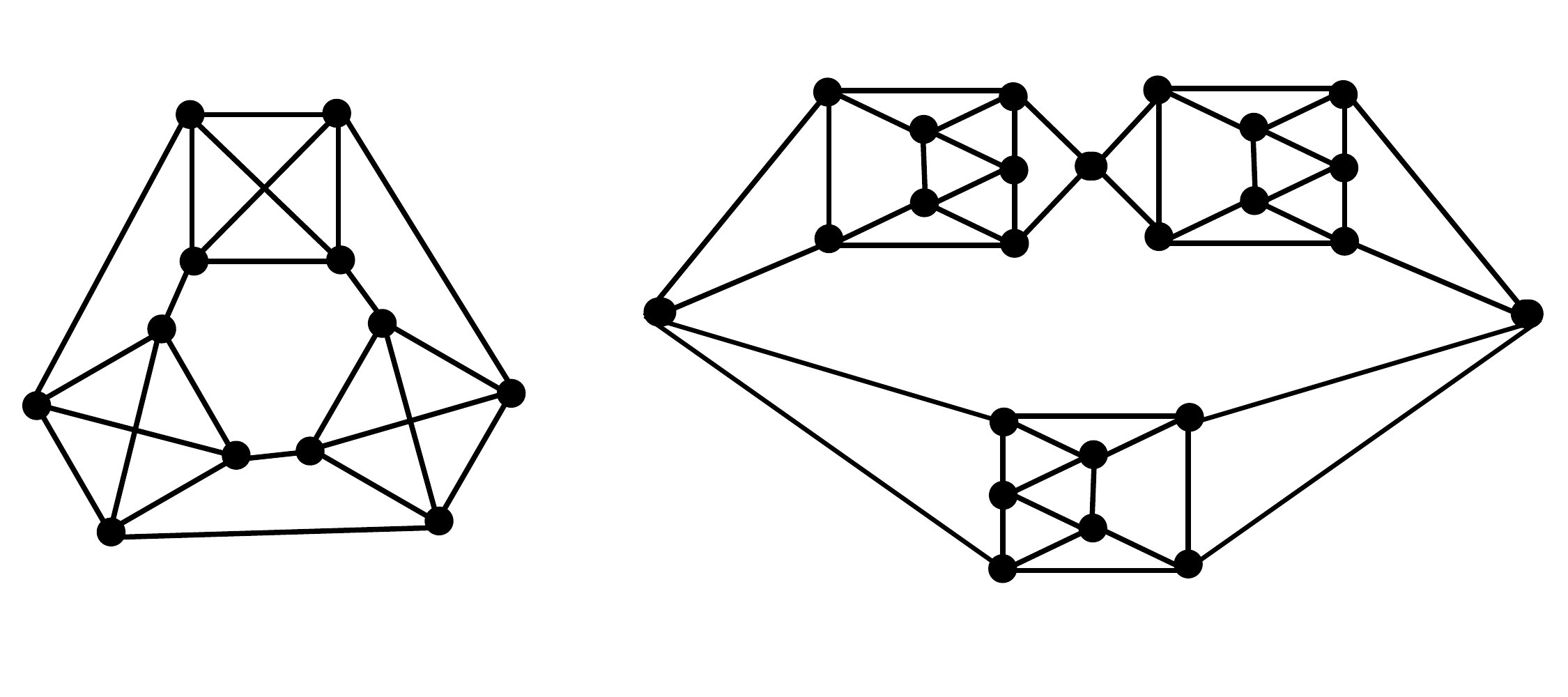}
	\label{F1}
			\caption{On the left is a non-planar $4$-regular $4$-edge-connected graph with no $S_3$-decomposition.  On the right is Lai's planar construction.}
\end{figure}

Thus, when $F$ is isomorphic to $S_3$, Theorem~\ref{ThomStar} implies that a $d$-regular $d$-edge-connected graph $G$ has an $F$-decomposition if $d\ge 5$ and 3 divides $e(G)$. For $d=3$, it is easy to observe that a $3$-regular graph has an $S_3$-decomposition if and only if it is bipartite. As for the case when $d=4$, the construction in Figure~\ref{F1} on the left provides a non-planar example of a $4$-regular $4$-edge-connected graph $G$ where $3$ divides $e(G)$ but $G$ does not have an $S_3$-decomposition. This led Bar\'{a}t and Thomassen~\cite{BT06}, who knew of this example, to conjecture in 2006 that every planar $4$-regular $4$-edge-connected graph $G$ where 3 divides $e(G)$ has an $S_3$-decomposition.  Unfortunately in the following year, Lai presented an infinite family of clever counterexamples (replicated in Figure~\ref{F1} on the right) to this nice conjecture~\cite{L}.  

Given that a typical $d$-regular graph is $d$-edge-connected, a natural setting in which to study these questions is that of random regular graphs. 
We utilize the \emph{configuration model} (also known as the \emph{pairing model}) introduced by Bollob\'{a}s~\cite{B80}.  Let $d \geq 1$ and $dn$ be even; we take a total of $dn$ points and partition them into $n$ \emph{cells} each consisting of exactly $d$ points.  Any perfect matching of $\frac{dn}{2}$ pairs of points is said to be a \emph{configuration}, also known as a \emph{pairing}.  Each configuration corresponds to a multigraph (possibly with loops) where the cells are vertices and the pairs are edges.  We denote the uniform probability space of configurations by $\cP_{n,d}$.  In the configuration model, we choose an element of $\cP_{n,d}$ uniformly at random and discard the result if the corresponding $d$-regular multigraph has loops or parallel edges.  This was shown to be equivalent to choosing a $d$-regular (simple) graph on $n$ vertices uniformly at random (c.f. Wormald's survey paper~\cite{W99} for more details). 

Observe that in any simple $4$-regular graph $G$, an orientation of the edges of $G$ in which every in-degree is either $4$ or $1$ (alternatively every out-degree is either $0$ or $3$) is equivalent to an \emph{$S_3$-decomposition}, that is a decomposition of the edges of $G$ into copies of $S_3$; namely, the vertices with out-degree $3$ are the centers of the stars formed by their out-edges. In light of this, we consider orientations of the edges of a configuration where the out-degree of every cell is 0 or 3, where the \emph{out-degree} of a cell is defined to be the number of points in the cell that are the tail of some edge in the orientation.  We call such an orientation a \emph{$(3,0)$-orientation}. 

The main result of this paper is as follows. Note that all asymptotics in this
article are as $n$ tends to infinity along positive integers divisible by $3$.

\begin{thm}\label{MainThm}
A configuration in $\cP_{n,4}$ has a $(3,0)$-orientation asymptotically almost surely, provided that $n$ is divisible by 3.
\end{thm}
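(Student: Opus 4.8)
The plan is to apply the small subgraph conditioning method of Robinson and Wormald to the random variable $Y = Y_n$ equal to the number of $(3,0)$-orientations of a configuration drawn uniformly from $\cP_{n,4}$. Recall a configuration is a perfect matching $M$ on $4n$ points partitioned into $n$ cells of size $4$, so $|\cP_{n,4}| = (4n-1)!!$. A $(3,0)$-orientation of $M$ is the same as a choice, for each cell, of whether it is a \emph{center} (exactly $3$ of its $4$ points are tails and $1$ is a head) or a \emph{non-center} (all $4$ points are heads), together with a perfect matching of $M$ pairing each tail-point with a head-point; counting the $2n$ edges forces exactly $2n/3$ centers and $n/3$ non-centers, whence the hypothesis $3 \mid n$ (and if $3 \nmid n$ then $Y \equiv 0$).

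\emph{First moment.} Interchanging summation over configurations and orientations,
\[
\bE[Y] \;=\; \frac{1}{(4n-1)!!}\sum_{(M,o)}\mathbf{1}\bigl[o\text{ is }(3,0)\bigr] \;=\; \frac{\binom{n}{2n/3}\,4^{2n/3}\,(2n)!}{(4n-1)!!},
\]
since one chooses the set of centers ($\binom{n}{2n/3}$ ways), a head-point in each center ($4$ ways each), and a bijection between the $2n$ tail-points and the $2n$ head-points ($(2n)!$ ways). Stirling's formula shows $\bE[Y]$ grows exponentially, at rate $c = \ln 3 - \tfrac43\ln 2 > 0$, so $\bE[Y]\to\infty$.

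\emph{Second moment.} We evaluate $\bE[Y^2]$ by summing over ordered pairs $(o_1,o_2)$ of $(3,0)$-orientations sharing a configuration. Classify each cell by the pair of roles it plays: center in both with coincident head-points, center in both with distinct head-points, center in $o_1$ only, center in $o_2$ only, or non-center in both; write $g_1,\dots,g_5$ for the numbers of cells of each type, with internal multiplicities $4,12,4,4,1$. The marginal and total constraints reduce to $g_1+g_2+g_3 = 2n/3$, $g_4 = g_3$, and $g_5 = n/3-g_3$, and then the relevant point counts automatically balance, leaving $(3g_1+2g_2)!\,(g_2+3g_3)!$ ways to complete the shared matching. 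Hence
\[
\bE[Y^2] \;=\; \frac{1}{(4n-1)!!}\sum_{g_2,\,g_3}\frac{n!}{g_1!\,g_2!\,g_3!\,g_4!\,g_5!}\;4^{g_1}\,12^{g_2}\,4^{g_3}\,4^{g_4}\;(3g_1+2g_2)!\,(g_2+3g_3)! .
\]
The heart of the proof is a Laplace-type analysis of this sum: one shows that the summand, as a function of the densities $g_i/n$, has a strict global maximum at the \emph{independence point} $(g_1,\dots,g_5)/n = (\tfrac19,\tfrac13,\tfrac29,\tfrac29,\tfrac19)$ — the values obtained if $o_1,o_2$ pick their centers and head-points independently — computes the Hessian there to extract the polynomial factor, and concludes $\bE[Y^2]/\bE[Y]^2 \to C$ for an explicit constant $C\ge 1$. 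Ruling out spurious interior or boundary maximizers, together with the accompanying two-variable calculus, is the most technical part of the argument.

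\emph{Short cycles and conclusion.} Let $C_{i,n}$ be the number of cycles of length $i$ in the configuration; these are asymptotically independent Poisson with means $\lambda_i = 3^i/(2i)$. For fixed $i$ and $j$ one computes $\bE[Y\,(C_{i,n})_j]/\bE[Y]$ by pinning down $j$ pairwise disjoint $i$-cycles and re-running the first-moment count with those cells' points partly prescribed; the contributions of distinct cycles factorize, and a transfer-matrix computation around a single cycle yields $\bE[Y\,(C_{i,n})_j]/\bE[Y] \to \bigl(\lambda_i(1+\delta_i)\bigr)^j$, as well as the analogous joint statement over several lengths, for an explicit $\delta_i > -1$. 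It then remains to verify the two arithmetic facts the method demands — $\sum_i \lambda_i\delta_i^2 < \infty$ and $\exp\bigl(\sum_i \lambda_i\delta_i^2\bigr) = C$ — after which the Robinson--Wormald theorem gives that $Y/\bE[Y]$ converges in distribution to an a.s.\ positive random variable, so $Y > 0$ asymptotically almost surely, which is Theorem~\ref{MainThm}. The principal obstacle is the second-moment optimization; the identity $\exp(\sum_i\lambda_i\delta_i^2) = C$ then doubles as a stringent consistency check on the whole computation.
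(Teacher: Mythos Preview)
Your outline is correct and follows essentially the same approach as the paper: compute $\bE[Y]$ and $\bE[Y^2]$ via the signature/cell-type decomposition (your $(g_1,\dots,g_5)$ are the paper's $(A,B,\tfrac{2n}{3}-A-B,\tfrac{2n}{3}-A-B,A+B-\tfrac{n}{3})$), locate the unique interior maximizer at the independence point $(A/n,B/n)=(\tfrac19,\tfrac13)$ by a Laplace-type analysis, extract the cycle-effect $\delta_i$ from $\bE[YX_i]/\bE[Y]$, and invoke the Robinson--Wormald theorem. The paper's explicit values are $\delta_i=(-1/3)^i$ and $C=\sqrt{3/2}$, and the consistency check you anticipate indeed holds: $\exp\bigl(\sum_{i\ge1}\lambda_i\delta_i^2\bigr)=\exp\bigl(\tfrac12\sum_{i\ge1}(1/3)^i/i\bigr)=\exp\bigl(-\tfrac12\ln(2/3)\bigr)=\sqrt{3/2}$.
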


Any 4-regular (simple) graph $G$ on $n$ vertices corresponds to exactly $(4!)^n = 24^n$ configurations in $\cP_{n,4}$.  Because each such graph corresponds to the same number of configurations, it follows that $G$ is a (uniformly) random 4-regular (simple) graph in the configuration model.  The probability that a configuration in $\cP_{n,4}$ is simple tends to a positive constant as $n$ tends to infinity (c.f. Wormald's survey paper~\cite{W99} for more details).  Thus, we have the following as a corollary.  

\begin{cor}\label{Main}
The edges of a random 4-regular (simple) graph on $n$ vertices can be decomposed into copies of $S_3$ asymptotically almost surely, provided that $n$ is divisible by 3.
\end{cor}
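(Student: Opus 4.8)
The plan is to establish Theorem~\ref{MainThm} by the small subgraph conditioning method of Robinson and Wormald, applied to the random variable $X = X_n$ equal to the number of $(3,0)$-orientations of a configuration drawn uniformly from $\cP_{n,4}$; Corollary~\ref{Main} then follows at once, since a configuration of $\cP_{n,4}$ is simple with probability bounded below by a positive constant and, conditioned on being simple, is a uniformly random simple $4$-regular graph whose $(3,0)$-orientations are exactly its $S_3$-decompositions, so that $\Pr[\text{no }S_3\text{-decomposition}\mid\text{simple}] \le \Pr[\text{no }(3,0)\text{-orientation}]/\Pr[\text{simple}] \to 0$.

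First I would describe a $(3,0)$-orientation combinatorially. The out-degrees of the $n$ cells sum to $\frac{4n}{2}=2n$ and each is $0$ or $3$, so exactly $\frac{2n}{3}$ cells are ``centers'' (out-degree $3$) and the rest are ``non-centers'' (out-degree $0$); this is exactly what forces $3 \mid n$. Hence a $(3,0)$-orientation is equivalent to the data of: a set $S$ of $\frac{2n}{3}$ center cells; for each center cell, a choice of which of its $4$ points is its unique in-point (``head-point''); and a perfect matching between the $2n$ tail-points and the $2n$ head-points. Counting such triples and dividing by $|\cP_{n,4}| = (4n)!/(2^{2n}(2n)!)$ yields
\[
\mathbb{E}[X] \;=\; \frac{\binom{n}{2n/3}\, 4^{2n/3}\, 2^{2n}\, ((2n)!)^2}{(4n)!},
\]
which a routine Stirling estimate shows is $\Theta(\gamma^n)$ for an explicit $\gamma > 1$; in particular $\mathbb{E}[X] \to \infty$.

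For each $k \ge 1$ let $Z_k$ be the number of cycles of length $k$ in the configuration; classically $(Z_1, Z_2, \dots)$ converges jointly to independent Poisson variables with means $\lambda_k = \frac{3^k}{2k}$. The next step is to compute, jointly over finitely many $k$, the factorial moments $\mathbb{E}[X \cdot (Z_k)_{j}]/\mathbb{E}[X]$ (here $(Z_k)_j$ denotes a falling factorial) — equivalently, the expected number of $(3,0)$-orientations of a configuration forced to contain prescribed disjoint short cycles, normalized by the unconditioned count — and to extract constants $\delta_k > -1$ with $\mathbb{E}[X \cdot (Z_k)_j]/\mathbb{E}[X] \to (\lambda_k(1+\delta_k))^{j}$. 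Concretely one sets up a small transfer matrix describing how the center/non-center labels and the local edge directions propagate around a $k$-cycle, and $1+\delta_k$ emerges as a ratio of eigenvalue-type quantities; one also checks that $\sum_k \lambda_k \delta_k^2 < \infty$, with the $\delta_k$ decaying geometrically fast enough to beat the growth of $\lambda_k$.

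The crux — and the step I expect to be the main obstacle — is the second moment. One must compute $\mathbb{E}[X^2]$, the expected number of ordered pairs of $(3,0)$-orientations, by parametrizing a pair of orientations according to the overlap pattern of their two center-sets, the correlations of their head-point choices, and the joint structure of their two matchings (the two orientations must agree, in the appropriate sense, on which endpoint is the tail of each edge), writing $\mathbb{E}[X^2]$ as a sum over these parameters of products of multinomial coefficients, and then applying Laplace's method and Lagrange multipliers to show the sum is dominated by the ``independent'' interior point and that
\[
\frac{\mathbb{E}[X^2]}{\mathbb{E}[X]^2} \longrightarrow \exp\!\Big(\sum_{k \ge 1} \lambda_k \delta_k^2\Big) < \infty .
\]
The delicate points are verifying that this interior critical point is the unique global maximum of the exponential rate function (ruling out competing boundary or interior maxima) and that the Gaussian second-order correction at the maximum reproduces exactly the constant $\exp(\sum_k \lambda_k \delta_k^2)$ demanded by the method. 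Given $\mathbb{E}[X] \to \infty$, the values of the $\delta_k$, and this second-moment ratio, the Robinson--Wormald theorem gives that $X/\mathbb{E}[X]$ converges in distribution to $W = \prod_{k \ge 1} (1+\delta_k)^{Z_k} e^{-\lambda_k \delta_k}$ with the $Z_k$ independent Poisson$(\lambda_k)$; since $\Pr[W > 0] = 1$, we conclude $\Pr[X > 0] \to 1$, which proves Theorem~\ref{MainThm} and hence Corollary~\ref{Main}.
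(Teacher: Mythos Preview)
Your outline is correct and follows essentially the same route as the paper: derive the corollary from Theorem~\ref{MainThm} via the standard conditioning-on-simplicity argument, and prove Theorem~\ref{MainThm} by the small subgraph conditioning method applied to the number of $(3,0)$-orientations, verifying the first-moment, joint factorial-moment (over short cycles), and second-moment hypotheses exactly as you describe. The paper carries out precisely these computations (Lemmas~\ref{Ex}, \ref{Ratio}, \ref{Ex2}), obtaining $\delta_k=(-1/3)^k$ and $\bE[Y^2]/\bE[Y]^2\to\sqrt{3/2}=\exp(\sum_k\lambda_k\delta_k^2)$; the only cosmetic difference is that the cycle moments are computed by a direct source/sink count rather than a transfer matrix.
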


Our proof uses the small subgraph conditioning method of Robinson and Wormald~\cite{RW92}. We outline the proof of our main result in Section~\ref{Outline} before proving the remaining required individual lemmas in Sections~\ref{expect}, \ref{var}, and \ref{falling}. However, first we note the connections between this problem and other interesting problems (as well as clarify some notation).

\subsection{Extended History}
There are many connections between orientations and decompositions; of particular interest is the following, known as the circular flow conjecture.

\begin{conj}[Jaeger 1984~\cite{J84}]\label{J1}
Let $k\ge 3$ be odd. Every $(2k-2)$-edge-connected graph $G$ has a \emph{mod $k$-orientation}, that is, an orientation of $E(G)$ such that for every vertex the difference between its out-degree and in-degree is 0 mod $k$. 
\end{conj}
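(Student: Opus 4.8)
\emph{On the status of this statement.} Conjecture~\ref{J1} is Jaeger's circular flow conjecture, one of the central open problems on nowhere-zero flows. For $k=3$ it is, via Tutte's equivalence between mod $k$-orientations and nowhere-zero $\bZ_k$-flows, precisely Tutte's $3$-flow conjecture, which is still unresolved; and for large odd $k$ the statement as literally written is now known to be false, by the counterexamples of Han, Li, Wu and Zhang. A genuine ``proof'' is therefore available only in a relaxed form, and the realistic deliverable is the following: there is a function $f(k)=O(k)$ such that every $f(k)$-edge-connected graph has a mod $k$-orientation. The plan below describes the flow-extension induction of Thomassen, refined by Lov\'asz, Thomassen, Wu and Zhang~\cite{T3,T13} (the same circle of ideas that yields Theorem~\ref{ThomStar}), which is how every bound of this shape has been obtained.

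\emph{Setup: strengthen to group connectivity.} The hypothesis ``$k$ divides $e(G)$'' is too rigid to induct on, so first I would prove the stronger \emph{$\bZ_k$-connectivity} statement: if $G$ is $f(k)$-edge-connected then for \emph{every} prescription $\beta\colon V(G)\to\bZ_k$ with $\sum_{v}\beta(v)\equiv e(G)\pmod k$ there is an orientation of $G$ with $d^+(v)-d^-(v)\equiv\beta(v)\pmod k$ for all $v$. Taking $\beta\equiv 0$ recovers a mod $k$-orientation when $k$ divides $e(G)$, and the general case is reduced to this one by a subdivision/parity argument. The advantage is that $\bZ_k$-connectivity is preserved under identifying vertices and under contracting suitable subgraphs, which is exactly what an induction needs.

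\emph{Core of the argument.} Proceed by induction on $v(G)+e(G)$ via a minimal counterexample $(G,\beta)$. Routine reductions handle parallel edges of multiplicity at least $k$ (contract a $\bZ_k$-contractible bunch), low-degree vertices, and small edge cuts, so one may assume $G$ is essentially $f(k)$-edge-connected with its bounded-size structure under control. The heart is then to locate, using the edge-connectivity, a small subgraph $H$ (in the Lov\'asz--Thomassen--Wu--Zhang argument, carefully chosen short paths relative to a breadth-first tree) that is \emph{$\bZ_k$-contractible}: every $\beta$-prescription on the rest of the graph extends across $H$. Contract $H$ to a point to get a strictly smaller instance $(G',\beta')$ that is still sufficiently edge-connected --- this contraction is where the edge-connectivity budget is spent, and the amount lost is what pins down $f(k)$. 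By minimality $(G',\beta')$ has a valid orientation; pull it back to $G-H$ and extend it over $H$ by contractibility, contradicting the choice of $(G,\beta)$. A careful accounting in the Lov\'asz--Thomassen--Wu--Zhang version gives $f(k)=3k-3$, hence $f(3)=6$.

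\emph{Main obstacle.} The difficulty is entirely in the contractibility step: every known family of $\bZ_k$-contractible configurations forces one to assume substantially more than $2k-2$ edges leaving every set, so the method stalls well above the conjectured threshold, and for large odd $k$ the Han--Li--Wu--Zhang examples show that more than $2k-2$ is genuinely necessary, so the conjecture cannot be established as stated and the real problem becomes determining the true optimal value of $f(k)$. Even the single case $k=3$ --- closing the gap from the known bound $6$ to the conjectured $4$, i.e.\ Tutte's $3$-flow conjecture --- appears to require a genuinely new mechanism for producing contractible structure, and I would not expect a short route around it.
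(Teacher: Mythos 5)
The statement you were asked about is not a theorem that the paper proves: it is Jaeger's conjecture, reproduced verbatim from~\cite{J84} as background in the Extended History subsection, and the paper offers no proof of it. You correctly identified this, and your account of its status is accurate --- the $k=3$ case is Tutte's $3$-flow conjecture (still open), Thomassen~\cite{T3} proved it under the stronger hypothesis of $2k^2+k$-edge-connectivity, Lov\'asz--Thomassen--Wu--Zhang~\cite{T13} improved this to $3k-3$, and the conjecture as literally stated has since been refuted for large odd $k$ by Han, Li, Wu, and Zhang (a development postdating the paper and hence not mentioned in it). Your sketch of the flow-extension induction --- strengthening to $\bZ_k$-connectivity, contracting $\bZ_k$-contractible subgraphs found via the edge-connectivity budget, and the accounting that yields $f(k)=3k-3$ --- is a faithful summary of the Lov\'asz--Thomassen--Wu--Zhang method, which is exactly the line of work the paper invokes. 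So there is no gap relative to the paper: the paper does not claim a proof and neither do you; your discussion is a correct and somewhat more detailed version of the paper's own contextual remarks, with the added (and correct) observation that the conjecture is now known to fail for large $k$.
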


Jaeger proved that his conjecture reduces to the special case of odd regular graphs as follows.
  
\begin{conj}[Jaeger 1988~\cite{J}]\label{J2}
Let $k\ge 3$ be odd. Every $(2k-2)$-edge-connected, $(2k-1)$-regular graph has a \emph{mod $k$-orientation}, that is, an orientation of $E(G)$ in which every in-degree is either $(3k-1)/2$ or $(k-1)/2$.
\end{conj}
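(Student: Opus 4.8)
The statement is Jaeger's circular flow conjecture specialized to odd-regular graphs. Since every $(2k-1)$-regular graph is in particular a graph, it would follow from the general Conjecture~\ref{J1}, and conversely Conjecture~\ref{J1} follows from it by the reduction quoted above, so the two are equivalent; accordingly the plan is to attack the (formally more general, but equivalent) statement that every $(2k-2)$-edge-connected graph admits a mod-$k$-orientation, working inside the class of $(2k-2)$-edge-connected graphs, which---unlike the class of $(2k-1)$-regular graphs---is closed under the operations the argument needs. The framework is that of \emph{$\beta$-orientations}: for $\beta\colon V(G)\to\bZ_k$ with $\sum_{v\in V(G)}\beta(v)\equiv 0\pmod k$, a $\beta$-orientation is an orientation of $E(G)$ with $d^+(v)-d^-(v)\equiv\beta(v)\pmod k$ for all $v$; a mod-$k$-orientation is the case $\beta\equiv 0$ (on a $(2k-1)$-regular graph, solving $2d^-(v)\equiv k-1\pmod k$ recovers the stated in-degree condition). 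Call $G$ \emph{$\bZ_k$-connected} if it has a $\beta$-orientation for every admissible $\beta$; the crucial fact is that if $H\subseteq G$ is $\bZ_k$-connected then $G$ has a mod-$k$-orientation if and only if $G/H$ does, and contracting a connected subgraph never lowers edge-connectivity.

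The proof would be an induction on $|V(G)|+|E(G)|$. First I would assemble an unavoidable family of \emph{reducible configurations}, each of which is either a proper $\bZ_k$-connected subgraph $H$---pairs of vertices joined by at least $k-1$ parallel edges, suitably decorated short cycles, subdivided $K_4$'s and wheels with enough multiplicity, following the contractible configurations of Lov\'asz--Thomassen--Wu--Zhang in \cite{T13}---so that one contracts $H$, applies the induction hypothesis to the smaller graph $G/H$, and lifts using the $\beta$-orientation guaranteed inside $H$; or a small edge cut that can be reduced by splitting $G$ along it, solving the two smaller pieces (after attaching $\bZ_k$-connected gadgets to supply the boundary freedom needed at the interface), and recombining along a matching of the cut edges; and along the way Mader's splitting-off theorem would be used at vertices of large degree to keep the degree sequence under control. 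The remaining and essential step is a discharging argument showing that a $(2k-2)$-edge-connected graph containing none of these configurations simply does not exist.

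That discharging step is exactly where the conjecture stalls, and I do not expect the template above to get through it as stated. The threshold $2k-2$ is tight, so there is no room to maneuver: a single splitting-off or a single contraction can drop an edge cut to size $2k-3$, which both breaks the induction hypothesis and violates the hypothesis of the theorem, while $(2k-1)$-regularity forbids the degree-reduction that makes larger edge-connectivities tractable. Concretely, for $k=3$ the statement asserts that every $4$-edge-connected $5$-regular graph has a mod-$3$-orientation---equivalently a nowhere-zero $3$-flow---whereas the strongest unconditional result \cite{T13} only reaches $6$-edge-connectivity. Closing the gap between $3k-3$ and $2k-2$ in the reduced-graph analysis, i.e.\ producing an unavoidable set of contractible configurations valid under mere $(2k-2)$-edge-connectivity, is the main obstacle, and it seems to call for a genuinely new idea rather than a sharper version of the contract-and-discharge scheme sketched here.
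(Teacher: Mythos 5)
You correctly recognize that Conjecture~\ref{J2} is an open conjecture, not a theorem: the paper does not prove it, merely states it for context, and no proof is currently known. Your summary of the contract-and-discharge / $\bZ_k$-connectedness framework of Thomassen and Lov\'asz--Thomassen--Wu--Zhang, and your identification of the gap between the known bound of $3k-3$ edge-connectivity and the conjectured $2k-2$, accurately describe why the conjecture remains open; declining to claim a proof is the right call here.
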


Note that when $k=3$, Conjecture~\ref{J1} is actually equivalent to Tutte's nowhere-zero $3$-flow conjecture~\cite{T66}, one of the most famous open problems in structural graph theory.  When $k=5$, Jaeger's conjecture implies the equally famous Tutte's nowhere-zero $5$-flow conjecture~\cite{T54}. Thomassen~\cite{T3} proved Conjecture~\ref{J1} for multigraphs when the edge-connectivity is at least $2k^2+k$. Lov\'{a}sz, Thomassen, Wu, and Zhang~\cite{T13} later improved this and proved Conjecture~\ref{J1} for graphs with edge-connectivity at least  $3k-3$. Despite these massive breakthroughs, proving Jaeger's conjecture still seems intractable. Yet, as noted before, a typical $(2k-1)$-regular graph is $(2k-1)$-edge-connected, and therefore, a natural idea is to study Conjecture~\ref{J2} in the setting of random $(2k-1)$-regular graphs. Using spectral techniques, Jaeger's conjecture was confirmed to hold for random regular graphs provided that $k$ is large enough as follows.  The proof however makes use of the Expander Mixing Lemma and does not apply when $k$ is too small. 

\begin{thm}[Alon and Pra{\l}at 2011~\cite{AP11}]
For large $k$, Jaeger's conjecture holds asymptotically almost surely for random $(2k-1)$-regular graphs.
\end{thm}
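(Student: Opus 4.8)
\medskip\noindent\textbf{Proof proposal.} A random $(2k-1)$-regular graph is a.a.s.\ $(2k-1)$-edge-connected, so establishing Jaeger's conjecture for it amounts to showing that a.a.s.\ such a graph $G$ (on $n$ vertices, $n$ even) has a mod-$k$-orientation; since $G$ is $(2k-1)$-regular this means, as in Conjecture~\ref{J2}, an orientation in which every in-degree is $(k-1)/2$ or $(3k-1)/2$. Fix the set $H\subseteq V(G)$ of vertices receiving in-degree $(3k-1)/2$; the orientation is then a realization of the prescribed in-degree function $f$ that equals $(3k-1)/2$ on $H$ and $(k-1)/2$ off $H$, and summing $f$ and equating with $e(G)=(2k-1)n/2$ forces $|H|=n/2$. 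By the classical characterization of orientations with prescribed in-degrees (Hakimi), $f$ is realizable iff $e(G[U])\le\sum_{v\in U}f(v)$ for every $U\subseteq V(G)$; applying this both to $U$ and to $V\setminus U$ and using the identity $2e(G[U])+e(U,V\setminus U)=(2k-1)|U|$, the whole requirement collapses to the single \emph{discrepancy condition}
\[
\bigl|\,|U\cap H|-|U\cap(V\setminus H)|\,\bigr|\ \le\ \frac{e(U,V\setminus U)}{k}\qquad\text{for every }U\subseteq V(G).
\]
So the task becomes: a.a.s.\ there is a balanced bipartition $(H,V\setminus H)$ that splits every vertex set nearly evenly, with the error controlled by that set's edge boundary.

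\medskip\noindent The second ingredient is pseudorandomness. By Friedman's second-eigenvalue theorem, a.a.s.\ $\lambda(G):=\max(|\lambda_2|,|\lambda_n|)\le 2\sqrt{2k-2}+o_n(1)=O(\sqrt k)$, while the degree is $2k-1$. Feeding $\lambda(G)$ into the Expander Mixing Lemma bounds, uniformly over all $U$, $e(G[U])\le\frac{(2k-1)|U|^2}{2n}+\frac{\lambda|U|}{2}$, and correspondingly $e(U,V\setminus U)\ge(2k-1)|U|(1-|U|/n)-\lambda|U|$. Two regimes then require nothing of $H$: if $|U|\le\beta n$ with $\beta:=\frac{k-1-\lambda}{2k-1}$ (which is positive and tends to $\tfrac12$ as $k\to\infty$), the first bound gives $e(G[U])\le(k-1)|U|/2$, hence $e(U,V\setminus U)\ge k|U|$, so the right-hand side of the displayed inequality is at least $|U|$ while the left-hand side is always at most $|U|$; and since the condition is invariant under $U\leftrightarrow V\setminus U$, it also holds automatically for $|U|\ge(1-\beta)n$. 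What remains is the \emph{critical window} $\beta n<|U|<(1-\beta)n$, of width $O(n/\sqrt k)$ and consisting only of sets of size $\Theta(n)$; for those, $e(U,V\setminus U)=\Theta(kn)$, so the tolerance in the displayed inequality is of order $n$.

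\medskip\noindent For the critical window a blanket union bound over all $2^{n}$ subsets is too weak, since that order-$n$ tolerance does not comfortably exceed the threshold such a bound demands; instead I would take $H$ to be a \emph{balanced cut of near-maximum size}, i.e.\ one with $e(G[H])=e(G[V\setminus H])$ as small as the spectral max-cut bound permits, so that a.a.s.\ $e(H,V\setminus H)\ge\frac{(2k-1)n}{4}+\Omega(\sqrt k\,n)$ (rebalancing an optimal cut exactly costs only a lower-order amount). For such $H$ the displayed inequality holds with slack when $U\approx H$ or $U\approx V\setminus H$; for the remaining window sets, which meet both sides of the partition in $\Omega(n/\sqrt k)$ vertices, one writes $e(G[U])=e(G[U\cap H])+e(G[U\cap(V\setminus H)])+e(U\cap H,\,U\cap(V\setminus H))$ and estimates each term by the mixing lemma, the surplus boundary of $H$ ensuring that any $U$ unbalanced across $(H,V\setminus H)$ still has boundary exceeding $k\cdot\bigl|\,|U\cap H|-|U\cap(V\setminus H)|\,\bigr|$. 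I expect this last verification — making the mixing-lemma and cut estimates clear each other for the $\Theta(n)$-sized sets near $|U|=n/2$, where each estimate is tight only up to an $O(n/\sqrt k)$ error — to be the main obstacle, and it is exactly where ``$k$ large'' is essential: the spectral gap, of order $\sqrt k$, simultaneously confines the critical window to width $O(n/\sqrt k)$ and supplies a max-cut surplus of order $\sqrt k\,n$, so these two $\Theta(\sqrt k)$ effects must dominate all lower-order error terms, which occurs only once $k$ exceeds an absolute constant — matching the fact that Jaeger's conjecture for small $k$ is still open.
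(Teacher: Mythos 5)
This theorem is not proven in the paper you are reading: it is quoted verbatim from Alon and Pra{\l}at~\cite{AP11}, and the only hint the paper gives about the argument is the remark that it ``makes use of the Expander Mixing Lemma and does not apply when $k$ is too small.'' So there is no in-paper proof to compare against, and your proposal has to be judged on its own terms and against what is known of~\cite{AP11}.

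Your reconstruction is on the right track and consistent with what the paper says about the cited result. The Hakimi-type reduction is correct: requiring in-degrees in $\{(k-1)/2,(3k-1)/2\}$ forces $|H|=n/2$, and applying Hakimi's condition to both $U$ and $V\setminus U$ together with $2e(G[U])+e(U,V\setminus U)=(2k-1)|U|$ does collapse everything to the single discrepancy inequality you display. The use of Friedman's theorem to get $\lambda=O(\sqrt{k})$, the Expander Mixing Lemma to kill all $U$ with $|U|\le\beta n$ or $|U|\ge(1-\beta)n$ where $\beta=\frac{k-1-\lambda}{2k-1}$, and the observation that the remaining ``critical window'' has width $O(n/\sqrt k)$ are all sound, and this is where ``$k$ large'' enters, matching the paper's remark.

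The genuine gap is exactly the one you flag yourself: the choice of $H$ and the verification of the discrepancy inequality inside the critical window. Choosing $H$ to be a balanced cut of near-maximum size is the right instinct (a random balanced $H$ does not survive a union bound over $2^n$ subsets, as you correctly note, and for \emph{any} balanced $H$ the inequality with $U=H$ already forces $e(H,V\setminus H)\ge kn/2 > (2k-1)n/4$, so a surplus of order $n/4$ over the expected cut is mandatory). But two things are left unaddressed. First, you assert $e(H,V\setminus H)\ge\frac{(2k-1)n}{4}+\Omega(\sqrt k\,n)$ for a balanced near-max cut; this is true but nontrivial, and not an immediate consequence of the mixing lemma (which only gives an \emph{upper} bound on the max cut). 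The cleaner device, which I believe is essentially what Alon and Pra{\l}at use, is to take $H$ to be a \emph{locally} maximum balanced cut: local optimality immediately gives that every vertex has at least $k$ of its $2k-1$ edges crossing, hence $e(H,V\setminus H)\ge kn/2$ deterministically, with no spectral input needed for that step. Second, even granting a good $H$, the final verification for $U$ in the window with $|U\cap H|$ and $|U\setminus H|$ both $\Theta(n)$ still has to combine $e(G[U\cap H])\le\frac{(k-1)|U\cap H|}{2}$ (from local optimality), the analogous bound in $\bar H$, and a mixing-lemma estimate on $e(U\cap H,U\setminus H)$, and one must check that the resulting slack beats the $\lambda\sqrt{pq}$ error term uniformly; this bookkeeping is the heart of the matter and is not carried out in your sketch. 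In short: correct skeleton, correct identification of where the difficulty lies, but the decisive computation is missing, and replacing ``global near-max cut'' by ``locally maximum cut'' would both match the source more closely and make the missing step more tractable.
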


Recently, utilizing the small subgraph conditioning method of Robinson and Wormald~\cite{RW92}, Pra{\l}at and Wormald~\cite{PW} were able to confirm Jaeger's conjecture (Conjecture~\ref{J2}) for the case when $k=3$, namely they proved the following theorem.

\begin{thm}[Pra{\l}at and Wormald 2015+~\cite{PW}]
Tutte's nowhere-zero $3$-flow conjecture holds asymptotically almost surely for random $5$-regular graphs.
\end{thm}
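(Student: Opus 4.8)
The result asserts that a random $5$-regular graph a.a.s.\ admits a nowhere-zero $3$-flow; by the standard correspondence between nowhere-zero $3$-flows and mod-$3$ orientations used in stating Conjecture~\ref{J2} (and since a random $5$-regular graph is a.a.s.\ $4$-edge-connected, so the hypothesis of the $3$-flow conjecture is met a.a.s.), it suffices to show that a random $5$-regular graph a.a.s.\ has a mod-$3$ orientation, i.e.\ an orientation in which every in-degree is $1$ or $4$ (the $k=3$ case of Conjecture~\ref{J2}). I would then pass to the configuration model exactly as in the present paper: a uniform simple $5$-regular graph (which requires $n$ even, the only divisibility condition here) is $\cP_{n,5}$ conditioned on being simple, an event of probability bounded away from $0$, so it is enough to prove that a uniform $\varphi\in\cP_{n,5}$ a.a.s.\ admits an orientation of its $5n/2$ pairs in which every cell has exactly $1$ or $4$ tail-points. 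Let $Y=Y_n$ count these orientations; the plan is to deduce $\bP(Y_n>0)\to1$ from the small subgraph conditioning method of Robinson and Wormald, using the short cycle counts $X_j$ (which are a.a.s.\ jointly asymptotically independent Poissons with means $\lambda_j=4^{j}/2j$) as the conditioning statistics.

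\emph{First moment.} A pair $(\varphi,O)$ is specified by choosing the $n/2$ cells of out-degree $1$ (the remaining $n/2$ having out-degree $4$), choosing the tail-point(s) within each cell ($\binom51=\binom54=5$ ways), and choosing a bijection between the $5n/2$ tail-points and the $5n/2$ head-points ($(5n/2)!$ ways); dividing by $|\cP_{n,5}|=(5n)!\big/\big((5n/2)!\,2^{5n/2}\big)$ gives $\bE[Y_n]=\binom{n}{n/2}\,5^{\,n}\,\big((5n/2)!\big)^2\,2^{5n/2}\big/(5n)!$, which Stirling's formula evaluates to $\Theta(n^{-1/2})\,c^{\,n}$ with $c=5\cdot2^{-3/2}>1$; in particular $\bE[Y_n]\to\infty$.

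\emph{Second moment.} Here lies the real work. Expand $\bE[Y_n^2]$ over ordered pairs $(O_1,O_2)$ of orientations and record for each point its pair of roles in $(O_1,O_2)$, so points split into four classes $TT,TH,HT,HH$. A pair of points can be an edge compatible with both orientations precisely when it is of type $\{TT,HH\}$ or $\{TH,HT\}$; hence, writing $n_{TT},\dots$ for the class sizes, the number of $\varphi$ realizing a given labelling is $n_{TT}!\,n_{TH}!$ when $n_{TT}=n_{HH}$ and $n_{TH}=n_{HT}$, and $0$ otherwise. The cell constraint (out-degree $1$ or $4$ in each of $O_1,O_2$) confines the five points of a cell to one of a short explicit list of ``profiles''; summing over the vector $(m_p)$ of profile-multiplicities (subject to $\sum_p m_p=n$ and the two balance equations) and applying Stirling reduces $\bE[Y_n^2]/\bE[Y_n]^2$ to a Laplace-type sum governed by an explicit rate function on the polytope of feasible profile-densities. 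The crux is to show this rate is maximized \emph{only} at the ``independent'' profile-density, where $O_1$ and $O_2$ behave as if chosen independently and the rate is exactly twice the rate of $\bE[Y_n]$, and that its Hessian there is negative-definite on the constraint tangent space; this yields $\bE[Y_n^2]/\bE[Y_n]^2\to C$ for an explicit constant $C$ that is $>1$ rather than $=1$ --- the very reason a bare second-moment bound fails and small subgraph conditioning is needed. Proving that the rate function has no spurious interior maximizer (typically via a convexity or entropy comparison) and verifying the non-degeneracy, through fairly heavy bookkeeping over the cell-profiles, is the main obstacle.

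\emph{Cycle statistics and conclusion.} For each $j\ge1$ I would compute $\bE[Y_n X_j]/\bE[Y_n]\to\lambda_j(1+\de_j)$ --- and, more generally, the joint factorial-moment factorization the method requires --- by a transfer-matrix argument around a $j$-cycle: each cell of the cycle contributes one of the finitely many tail/head patterns of its two cycle-pairs compatible with out-degree $1$ or $4$ (the other three points being summed out), so traversing the cycle multiplies a small fixed matrix whose leading eigenvalue recovers the ``independent'' value $\lambda_j$ and whose subdominant eigenvalues produce $\de_j$. The $\de_j$ decay geometrically, fast enough that $\sum_j\lambda_j\de_j^2<\infty$ --- this needs the subdominant transfer-matrix eigenvalues to be suitably small relative to the leading one, which should be checked explicitly --- and one then verifies the identity $C=\exp\!\big(\sum_j\lambda_j\de_j^2\big)$ matching the second-moment constant. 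With the three hypotheses of the Robinson--Wormald theorem established, $\bP(Y_n>0)\to1$; undoing the reduction to $\cP_{n,5}$ and the flow-versus-orientation translation then gives the theorem.
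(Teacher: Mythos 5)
The paper does not prove this statement at all: it is quoted verbatim as a result of Pra{\l}at and Wormald~\cite{PW} (``Almost all $5$-regular graphs have a $3$-flow'') and is cited as background motivation for the present work. There is therefore no ``paper's own proof'' to compare against. What you have written is a plausible high-level sketch of what is in fact the strategy of~\cite{PW} --- pass to the configuration model, reformulate nowhere-zero $3$-flows as orientations with every in-degree $1$ or $4$, and apply Robinson--Wormald small subgraph conditioning with short cycle counts as the conditioning statistics --- and it parallels the structure of the present paper's own argument for its $S_3$-decomposition theorem (signatures play the role of your cell profiles, and Sections~\ref{expect}--\ref{falling} carry out the analogues of your three steps).

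As a \emph{proof}, however, your proposal leaves all the substantive work unfinished: the entire second-moment computation (your ``Laplace-type sum governed by an explicit rate function,'' the identification of its maximizer with the independent profile, and the Hessian non-degeneracy) is described but not carried out, and you explicitly flag it as ``the main obstacle''; likewise the transfer-matrix computation of $\delta_j$ and the verification that $C=\exp(\sum\lambda_j\delta_j^2)$ are deferred. These are exactly the parts that make~\cite{PW} a substantial paper, so the proposal is an outline rather than a proof. One small concrete slip: in your first-moment asymptotic, Stirling gives $\bE[Y_n]\sim\sqrt{5}\,\bigl(5\cdot2^{-3/2}\bigr)^n$, a $\Theta(1)$ polynomial factor rather than $\Theta(n^{-1/2})$; the two $\sqrt{n}$ factors from $\binom{n}{n/2}$ and from $((5n/2)!)^2/(5n)!$ cancel.
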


Given these results, we were inspired to consider decompositions of random regular graphs, in particular whether the Bar\'{a}t-Thomassen conjecture might hold in the random case. Given Theorem~\ref{ThomStar}, it is also natural to wonder more generally whether random $d$-regular graphs have $S_k$ decompositions for some $k > \lceil d/2 \rceil$. We believe our methods could be applied to these questions. 

As for other subgraphs $F$, Bar\'{a}t and Thomassen~\cite{BT06} conjectured in 2006 that for every tree $T$, there exists $c_T$ such that every $c_T$-edge-connected graph has a decomposition of its edges into copies of $F \simeq T$. Theorem~\ref{ThomStar} confirmed this when $T$ is a star and indeed gives the best possible value of $c_T$ in that case. More recently, Bensmail, Harutyunyan, Le, Merker, and Thomass\'{e}~\cite{BHLMT} proved the conjecture for all trees $T$.  However, determining what the best possible edge-connectivity is or, in the case of random regular graphs, what the best possible degree is, are still open problems.

\subsection{Notation}
Throughout this paper, if $G$ is a multigraph, then we let $V(G)$ and $E(G)$ denote the vertex and edge sets of $G$ respectively whereas $v(G)$ (or $n$) and $e(G)$ denote the number of vertices and edges in $G$.  We say that an event $\mathcal{A} = \mathcal{A}(n)$ holds asymptotically almost surely (a.a.s.) if $\bP[\mathcal{A}(n)] \rightarrow 1$ as $n \rightarrow \infty$ with the obvious necessary parity restrictions on $n$.  For example in the case of finding $S_3$-decompositions of $4$-regular graphs, the necessary condition is that 3 divides $n$ (and hence the number of edges is also divisible by 3).  We denote the set $\left\{1, \ldots, n \right\}$ by $[n]$ and the falling factorial $\frac{n!}{(n-j)!}$ by $(n)_j$.

\section{Outline of the Proof of the Main Result}\label{Outline} 

Let $Y = Y(n)$ denote the number of $(3,0)$-orientations of a random element of $\cP_{n,4}$.  In Section \ref{expect}, we approximate $\bE[Y]$ using Stirling's approximation as follows.
\begin{lemma}\label{Ex}
\[ \bE[Y] \sim \frac{3}{\sqrt{2}}\left(\frac{27}{16}\right)^{n/3}>0.\]
\end{lemma}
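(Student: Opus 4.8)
The plan is to compute $\bE[Y]$ by summing over all ``types'' of $(3,0)$-orientations and then showing that the sum is dominated by a single term, which Stirling's approximation evaluates to the claimed expression. A configuration in $\cP_{n,4}$ consists of $4n$ points in $n$ cells, paired into $2n$ edges. A $(3,0)$-orientation assigns to each cell either out-degree $3$ (call these the \emph{source} cells) or out-degree $0$ (call these the \emph{sink} cells); if there are $k$ source cells and $n-k$ sink cells then, counting tails, we need $3k = $ (number of edges) $- $ (number of heads at sink cells) and every edge has exactly one tail, so the total number of tails is $2n$; since sink cells contribute $0$ tails and source cells contribute $3$ each, we get $3k = 2n$, i.e. $k = 2n/3$. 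This is why $3 \mid n$ is needed. So every $(3,0)$-orientation has exactly $2n/3$ source cells and $n/3$ sink cells.

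Next I would set up the count. First choose which cells are sources: $\binom{n}{2n/3}$ ways. Each source cell has $3$ of its $4$ points designated as tails and $1$ as a head; choose which point is the head: $4$ ways per source cell, so $4^{2n/3}$. Now we have a prescribed set of $2n$ ``tail-points'' and $2n$ ``head-points'' (the $3(2n/3) = 2n$ tails at sources, and the $2n/3$ heads at sources plus all $4(n/3) = 4n/3$ points at sinks, totalling $2n/3 + 4n/3 = 2n$ heads). A valid oriented configuration is then a perfect matching between the tail-points and the head-points — but we must be careful: in the configuration model an edge is an unordered pair, and here each edge is forced to consist of one tail-point and one head-point, so such matchings are in bijection with oriented configurations of this type, giving $(2n)!$ matchings. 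Finally, to get the \emph{probability} (or equivalently to normalize), recall $|\cP_{n,4}| = (4n)!/(2n! \, 2^{2n})$ is the number of perfect matchings on $4n$ points. Hence
\[
\bE[Y] = \frac{\binom{n}{2n/3}\, 4^{2n/3}\, (2n)!}{(4n)!\,/\,\bigl(2^{2n}\,(2n)!\bigr)} = \frac{\binom{n}{2n/3}\, 4^{2n/3}\, (2n)!^2\, 2^{2n}}{(4n)!}.
\]
I would then apply Stirling's formula $m! \sim \sqrt{2\pi m}\,(m/e)^m$ to each factorial, collect the polynomial-in-$n$ prefactors (the $\sqrt{2\pi m}$ terms combine to a constant since the exponential growth rates balance by design), and collect the exponential base: one expects $\binom{n}{2n/3} \sim \text{const}\cdot n^{-1/2} (3/2^{2/3})^{n} = \text{const}\cdot n^{-1/2}(27/4)^{n/3}$, and after folding in $4^{2n/3} = (16)^{n/3}$, $(2n)!^2 2^{2n}/(4n)!$ contributes its own base, the product simplifying to $(27/16)^{n/3}$ with constant prefactor $3/\sqrt{2}$.

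The main obstacle, and the step deserving the most care, is the combinatorial bijection in the middle: verifying that counting oriented configurations really does reduce cleanly to $(2n)!$ matchings between tail-points and head-points, with no over- or under-counting. One subtlety is self-loops and parallel edges — but since we are working in $\cP_{n,4}$ (the pairing model) rather than with simple graphs, loops and multi-edges are allowed and cause no trouble, except that a loop at a cell would force that cell's point-pair to be both a tail and a head of the same edge, which is impossible for a source-source loop but fine for the bookkeeping; one must simply check these degenerate configurations are counted consistently (they are, since a loop is just a pair of points in the same cell, and the tail/head labels are attached to points not edges). A second subtlety is that the event ``$Y$ counts orientations of a \emph{uniformly random} configuration'' means $\bE[Y] = \sum_{\text{configs } C} \Pr[C] \cdot (\text{number of }(3,0)\text{-orientations of }C) = \frac{1}{|\cP_{n,4}|}\sum_C (\#\text{orientations})$, and $\sum_C (\#\text{orientations})$ is exactly the number of (configuration, orientation) pairs, which is what the product $\binom{n}{2n/3} 4^{2n/3} (2n)!$ computes — so linearity of expectation does the work and no independence is needed. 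Once the bijection is pinned down, the remainder is a routine, if slightly tedious, Stirling estimate; I would organize it by taking logarithms, extracting the $n\log n$ and $n$ coefficients to confirm they vanish and yield $\log(27/16)/3$ respectively, and tracking the $O(1)$ and $O(\log n)$ terms to pin down the constant $3/\sqrt 2$.
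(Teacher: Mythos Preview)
Your proposal is correct and is essentially identical to the paper's proof: your ``source cells with a designated head point'' are exactly the paper's \emph{signatures} (centers with a special point), your tail/head points are the paper's out/in-points, and both arrive at the same exact formula $\bE[Y] = \binom{n}{2n/3}\,4^{2n/3}\,(2n)!/M(4n)$ before applying Stirling. The only difference is cosmetic terminology and that the paper packages the Stirling step as $4^{5n/3}\binom{n}{2n/3}/\binom{4n}{2n}$ before evaluating.
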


In order to show that configurations admit at least one $(3,0)$-orientation, we need to show that a.a.s. $Y > 0$.  It is natural then to try to use the second moment method (coming from Chebyshev's inequality) which says that if $Y$ is a non-negative random variable and $\frac{\bE[Y^2]}{\bE[Y]^2} \rightarrow 0$ as $n\rightarrow \infty$, then a.a.s. $Y>0$. To that end, we approximate $\bE[Y^2]$ in Section \ref{var} using optimization, Taylor expansions and multivariable integration to obtain the following.

\begin{lemma}\label{Ex2}
$$\bE[Y^2] \sim \frac{2\pi n}{9}\cdot \frac{81}{4 \pi n} \sqrt{\frac{3}{2}} \cdot \left(\frac{27}{16}\right)^{2n/3} = \sqrt{\frac{3}{2}} \cdot \frac{9}{2} \left(\frac{27}{16}\right)^{2n/3}.$$
\end{lemma}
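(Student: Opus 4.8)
The plan is to compute $\bE[Y^2]$ by counting pairs of $(3,0)$-orientations of a random configuration in $\cP_{n,4}$, i.e. $\bE[Y^2] = \sum \bP[\sigma_1, \sigma_2 \text{ are both } (3,0)\text{-orientations}]$ over ordered pairs $(\sigma_1,\sigma_2)$, where the sum is really over the combinatorial ``overlap type'' of such a pair. First I would set up the right parameterization: for each cell (vertex), record its out-degree pattern under $\sigma_1$ and under $\sigma_2$ — each is either a ``source'' (out-degree $3$, i.e.\ $1$ in-point) or a ``sink'' (out-degree $0$) — and then refine by how the two orientations agree or disagree on the $4$ points of the cell and on the $dn/2$ pairs. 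Concretely, a cell is one of four types (source/source, source/sink, sink/source, sink/sink), and within that one must track, for the two non-trivial mixed types and for source/source cells, which of the $4$ points is the ``in-point'' of $\sigma_1$ versus $\sigma_2$. Globally one then counts configurations compatible with prescribed numbers of each type together with a consistent assignment of how pairs join points of matching orientation-status across both $\sigma_1$ and $\sigma_2$. This produces a sum $\bE[Y^2] = \sum_{\mathbf{x}} f(\mathbf{x})$ where $\mathbf{x}$ ranges over a bounded-dimensional simplex of rescaled counts and $f(\mathbf{x})$ is a product of multinomial coefficients and matching-count factors divided by the total number of pairings $(dn-1)!! = (4n-1)!!$.

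Next I would apply Stirling's approximation to every factorial appearing in $f(\mathbf{x})$, writing $f(\mathbf{x}) \approx C(\mathbf{x})\, n^{-k/2}\, g(\mathbf{x})^n$ for an explicit exponential-rate function $g$ and a polynomial/constant prefactor, then replace the sum over the lattice $\mathbf{x}$ by an integral via the Laplace method: the dominant contribution comes from the interior maximizer of $\log g$ over the feasible simplex. The crucial sub-step is to verify that $\max_{\mathbf{x}} g(\mathbf{x}) = \left(\tfrac{27}{16}\right)^{2/3}$, matching $\bE[Y]^2$ in exponential order (by Lemma~\ref{Ex}), and — importantly — that this maximum is attained precisely at the ``independence point'' $\mathbf{x}^\ast$ corresponding to the two orientations behaving like independent copies (so the fractions of each joint type factor as products of the marginal source/sink fractions from the first-moment computation). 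Then I would compute the Hessian of $\log g$ at $\mathbf{x}^\ast$, plug its determinant into the multivariate Laplace formula $\sum_{\mathbf{x}} f(\mathbf{x}) \sim f(\mathbf{x}^\ast)\,(2\pi)^{k/2}/\sqrt{\det(-n\,\mathrm{Hess})}$, and collect the resulting constants; the displayed form $\frac{2\pi n}{9}\cdot \frac{81}{4\pi n}\sqrt{3/2}\cdot(27/16)^{2n/3}$ strongly suggests the feasible region is effectively $2$-dimensional after accounting for linear constraints (one $2\pi n$-type Gaussian factor and one $1/n$-type factor), so I would organize the constraint-counting to isolate exactly which directions are ``free'' and which are pinned.

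The main obstacle I anticipate is the optimization step: showing that $\log g$ has a \emph{unique} interior critical point on the simplex and that it is a global maximum with the clean value $(27/16)^{2/3}$. Because the two orientations are coupled only through sharing the underlying pairing, $g$ is a ratio of entropy-type terms and the Lagrange/KKT conditions will be a system of polynomial equations in the type-fractions; guessing the independence solution and then proving uniqueness (e.g.\ via convexity of an appropriate substitution, or by reducing to a one-variable problem after exploiting the symmetry swapping $\sigma_1 \leftrightarrow \sigma_2$ and the source/sink roles) is where the real work lies. A secondary technical nuisance is bookkeeping the matching factors $(m_i - 1)!!$ arising from pairing up points that are simultaneously ``tail-under-$\sigma_1$'' and ``tail-under-$\sigma_2$'' etc.; I would handle this by introducing an auxiliary $3\times 3$ (or similar) ``transport matrix'' recording how many pairs join each combined point-status to each other, so that the constraints become linear in these transport variables and the Laplace integral is taken over that expanded set of variables. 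Once the maximizer, the Hessian determinant, and the prefactor $f(\mathbf{x}^\ast)$ are in hand, Lemma~\ref{Ex2} follows by direct substitution; the only remaining care is to confirm the boundary of the simplex contributes negligibly, which follows from the maximum being interior and $\log g$ being bounded above there by a strictly smaller value near the boundary.
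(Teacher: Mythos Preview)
Your high-level plan---parameterize ordered pairs of orientations by overlap type, apply Stirling, then Laplace at the unique interior maximizer---is exactly the paper's approach, and your reading of the displayed constants (an effectively two-dimensional free region) is correct. Two points where the paper's execution is sharper and where your sketch would go wrong if carried out as written:

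First, the parameterization collapses to just two integers: $A$ is the number of cells that are centers (out-degree $3$) under both orientations with the \emph{same} designated in-point, and $B$ is the number that are centers under both with \emph{different} in-points. The counts of source/sink, sink/source, and sink/sink cells are then forced to be $\tfrac{2n}{3}-A-B$, $\tfrac{2n}{3}-A-B$, and $A+B-\tfrac{n}{3}$ respectively, so no further cell-type variables are needed.

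Second, the matching step is \emph{not} a product of double factorials $(m_i-1)!!$, and no transport matrix is required. A configuration extends a signature precisely when every pair joins an in-point to an out-point; hence a configuration extends both signatures iff every pair joins an $(\text{in},\text{in})$-point to an $(\text{out},\text{out})$-point, or an $(\text{in},\text{out})$-point to an $(\text{out},\text{in})$-point. There are $3A+2B$ points of $(\text{out},\text{out})$-status and the same number of $(\text{in},\text{in})$-status, and $2n-3A-2B$ of each mixed status, so the number of extending configurations is exactly $(3A+2B)!\,(2n-3A-2B)!$. Your proposed transport matrix would have all off-block entries forced to zero, leaving nothing to optimize over beyond $(A,B)$. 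With this in hand the maximizer is $(a_0,b_0)=\bigl(\tfrac{1}{9},\tfrac{1}{3}\bigr)$ with Hessian determinant $81$, and the Laplace computation gives the stated constant.
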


Unfortunately, the second moment method does not apply because $\bE[Y]^2$ and $\bE[Y^2]$ are of the same order, as the following corollary notes.

\begin{cor}\label{Ex2b}
$$\frac{\bE[Y^2]}{\bE[Y]^2} \sim \sqrt{\frac{3}{2}} > 0.$$
\end{cor}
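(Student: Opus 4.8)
The plan is to obtain Corollary~\ref{Ex2b} as an immediate consequence of Lemmas~\ref{Ex} and~\ref{Ex2}. Since Lemma~\ref{Ex} gives $\bE[Y] \sim \frac{3}{\sqrt{2}}\left(\frac{27}{16}\right)^{n/3}$, squaring this asymptotic equivalence yields
\[
\bE[Y]^2 \sim \frac{9}{2}\left(\frac{27}{16}\right)^{2n/3}.
\]
Here I would want to be slightly careful that squaring respects asymptotic equivalence: if $a_n \sim b_n$ with $b_n \neq 0$, then $a_n^2 \sim b_n^2$, so this step is legitimate. Then Lemma~\ref{Ex2} states $\bE[Y^2] \sim \sqrt{\tfrac{3}{2}}\cdot\frac{9}{2}\left(\frac{27}{16}\right)^{2n/3}$, so dividing the two equivalences (again valid since the denominator is nonzero and bounded away from $0$ after dividing out the common exponential factor) gives
\[
\frac{\bE[Y^2]}{\bE[Y]^2} \sim \frac{\sqrt{\tfrac{3}{2}}\cdot\frac{9}{2}\left(\frac{27}{16}\right)^{2n/3}}{\frac{9}{2}\left(\frac{27}{16}\right)^{2n/3}} = \sqrt{\frac{3}{2}},
\]
which is the claimed statement, and it is manifestly positive.

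The only real content here is bookkeeping: confirming that the constant $\frac{9}{2}$ and the exponential factor $\left(\frac{27}{16}\right)^{2n/3}$ cancel exactly, leaving the ratio tending to the constant $\sqrt{3/2} \approx 1.2247$. There is no genuine obstacle; the substantive work lives entirely in Lemmas~\ref{Ex} and~\ref{Ex2}, which are proved later via Stirling's approximation and a Laplace-type analysis of the second moment sum, respectively. The one thing worth flagging is simply the \emph{interpretation}: because the ratio tends to a constant strictly greater than $1$ rather than to $1$ (or to $0$), the vanilla second moment method fails to establish $Y>0$ a.a.s., and this is precisely the motivation for invoking the small subgraph conditioning method of Robinson and Wormald in the sequel — the constant $\sqrt{3/2}$ must eventually be matched by the corresponding product $\prod_i \exp\!\big((\lambda_i-1)^2/\text{something} - \dots\big)$ over short cycle counts, confirming that the excess second moment is entirely accounted for by fluctuations in the number of short cycles.

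Concretely, I would write: "Immediate from Lemmas~\ref{Ex} and~\ref{Ex2}, since $\bE[Y]^2 \sim \frac{9}{2}\left(\frac{27}{16}\right)^{2n/3}$." That one line suffices.
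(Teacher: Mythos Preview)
Your proposal is correct and matches the paper's approach exactly: the corollary is stated without a standalone proof in Section~\ref{Outline}, and the one-line derivation you give is precisely what the paper records at the end of the proof of Lemma~\ref{Ex2} (``Therefore, $\frac{\bE[Y^2]}{\bE[Y]^2} \sim \sqrt{\frac{3}{2}}$''). Your suggested sentence ``Immediate from Lemmas~\ref{Ex} and~\ref{Ex2}'' is all that is needed.
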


Here the distribution of $Y$ is affected by small but not too common (expected number is bounded) subgraphs of the random 4-regular graph, namely short cycles.  In such situations we can attempt to apply the small subgraph conditioning method.  When this method works, by conditioning on the small subgraph counts, we are able to control the variance of $Y$ and in so doing show that $Y>0$ asymptotically almost surely. 

To understand how this works, consider partitioning the set of all 4-regular graphs on $n$ vertices (with $n$ divisible by 3) by the number of triangles.  Within each partition class, the expected number of $(3,0)$-orientations may be smaller than $\bE[Y]$, though by at most a constant factor. Meanwhile the variance inside each class is smaller than the variance of $Y$. Applying the second moment method to the classes individually yields an increase in the probability that $Y>0$, yet it still does not show that this probability tends to 1 asymptotically. So we further partition the classes by the number of $4$-cycles, then by the number of $5$-cycles, and so on. Surprisingly, by conditioning on the numbers of all cycles, it is possible to reduce the variance of $Y$ to any desired fraction of $\bE[Y]^2$. Intuitively, this seems plausible as graphs that have the same number of triangles, $4$-cycles, etc. tend to have a similar structure and so admit less variance in the number of $S_3$-decompositions. Thankfully we do not actually perform such an analysis, relying on the method of Robinson and Wormald~\cite{RW92}; for a proof see Janson's~\cite{S} paper.

%

\begin{thm}[Wormald 1999~\cite{W99}]\label{key}
Let $\lambda_j > 0$ and $\delta_j > -1$ be real numbers, for all $j \geq 1$.  Suppose for each $n$ there are non-negative random variables $X_j = X_j(n)$, $j \geq 1$, and $Y = Y(n)$ (defined on the same probability space) such that 
$X_j$ is integer valued and $\bE[Y]>0$ (for $n$ sufficiently large).    Furthermore, suppose that
\begin{enumerate}
\item[(1.)] for each $j \geq 1,$ $X_1, X_2, \ldots, X_j$ are asymptotically independent Poisson random variables with 
$$\bE[X_i] \sim \lambda_i, \text{ for all } i \in [j];$$ 
\item[(2.)] \vspace{-10pt} $$\frac{\bE\left[ Y(X_1)_{\ell_1} \ldots (X_j)_{\ell_j}\right]}{\bE[Y]} \sim \prod_{i=1}^j \left(\lambda_i\left(1+\delta_i\right) \right)^{\ell_i}$$\vspace{-10pt}
\noindent
for any fixed $\ell_1, \ldots, \ell_j$ where $(X)_\ell$ is the falling factorial;
\vspace{5pt}
\item[(3.)] $$\sum_{i\geq 1}\lambda_i \delta_i^2 < \infty; \text{ and } \frac{\bE[Y(n)^2]}{\bE[Y(n)]^2}\leq \exp\left(\sum_{i\geq 1}\lambda_i \delta_i^2\right) + o(1) \text{ as $n \rightarrow \infty$}.$$
\end{enumerate}
Then, asymptotically almost surely $Y > 0$.
\end{thm}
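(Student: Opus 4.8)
The plan is to follow Janson's treatment of the method~\cite{S}, working throughout with the normalized variable $\tilde Y_n := Y(n)/\bE[Y(n)]$, which is well-defined and nonnegative with $\bE[\tilde Y_n]=1$ (for $n$ large). Since $\{\tilde Y_n>0\}=\{Y>0\}$, it suffices to prove $\bP[\tilde Y_n=0]\to0$.

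First I would construct the candidate limit. Let $Z_1,Z_2,\dots$ be independent with $Z_i\sim\mathrm{Poisson}(\lambda_i)$, put $W_k:=\prod_{i=1}^k(1+\delta_i)^{Z_i}e^{-\lambda_i\delta_i}$ and $W:=\lim_{k\to\infty}W_k$. From $\bE[(1+\delta_i)^{Z_i}]=e^{\lambda_i\delta_i}$ one gets $\bE[W_k]=1$ and $\bE[W_k^2]=\exp\!\left(\sum_{i\le k}\lambda_i\delta_i^2\right)$; since $\sum_i\lambda_i\delta_i^2<\infty$ by hypothesis~(3), the $W_k$ form an $L^2$-bounded martingale, so $W$ exists as an almost sure and $L^2$ limit with $\bE[W]=1$ and $\bE[W^2]=\exp\!\left(\sum_{i\ge1}\lambda_i\delta_i^2\right)$. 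Moreover $W>0$ almost surely: by Kakutani's theorem on infinite products of independent, nonnegative, unit-mean random variables this reduces to $\sum_i\lambda_i\!\left(1+\tfrac{\delta_i}{2}-\sqrt{1+\delta_i}\right)<\infty$, which holds because the summand is at most $\lambda_i\delta_i^2$ for every $\delta_i>-1$. This almost sure positivity of $W$ is precisely what will rule out $\tilde Y_n=0$ in the limit.

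Next I would connect $\tilde Y_n$ to the explicit statistic $f_k:=\prod_{i=1}^k(1+\delta_i)^{X_i}e^{-\lambda_i\delta_i}$, a continuous function of $(X_1,\dots,X_k)$. By hypothesis~(1), understood as convergence of all joint factorial moments to those of independent Poissons, $(X_1,\dots,X_k)\Rightarrow(Z_1,\dots,Z_k)$, hence $f_k\Rightarrow W_k$ and $\bE[f_k]\to1$, $\bE[f_k^2]\to\bE[W_k^2]$. Expanding $(1+\delta_i)^{X_i}=\sum_{\ell_i}\binom{X_i}{\ell_i}\delta_i^{\ell_i}$ and applying hypothesis~(2) term by term gives $\bE[\tilde Y_n f_k]\to\prod_{i\le k}e^{\lambda_i\delta_i(1+\delta_i)}e^{-\lambda_i\delta_i}=\exp\!\left(\sum_{i\le k}\lambda_i\delta_i^2\right)=\bE[W_k^2]$. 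By Cauchy--Schwarz, $\liminf_n\bE[\tilde Y_n^2]\ge\bE[\tilde Y_n f_k]^2/\bE[f_k^2]\to\bE[W_k^2]$ for every $k$, so $\liminf_n\bE[\tilde Y_n^2]\ge\bE[W^2]$; combined with the upper bound in hypothesis~(3), $\bE[\tilde Y_n^2]\to\bE[W^2]$. Therefore $\bE[(\tilde Y_n-f_k)^2]=\bE[\tilde Y_n^2]-2\bE[\tilde Y_n f_k]+\bE[f_k^2]$ tends to $\bE[W^2]-\bE[W_k^2]$, which tends to $0$ as $k\to\infty$.

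Finally I would assemble these facts. For any bounded Lipschitz $h$, the bound $|\bE h(\tilde Y_n)-\bE h(W)|\le\|h\|_{\mathrm{Lip}}\bE\!\left|\tilde Y_n-f_k\right|+|\bE h(f_k)-\bE h(W_k)|+|\bE h(W_k)-\bE h(W)|$, with the first term controlled by $\|h\|_{\mathrm{Lip}}\|\tilde Y_n-f_k\|_2$ (uniformly small in large $n$ once $k$ is large, by the previous step), the third by $W_k\to W$, and the middle by $f_k\Rightarrow W_k$ for fixed $k$, gives $\tilde Y_n\Rightarrow W$. Since $W>0$ almost surely, $\bP[W\le a]\downarrow0$ as $a\downarrow0$, so for each continuity point $a$ of the law of $W$ we have $\limsup_n\bP[\tilde Y_n=0]\le\limsup_n\bP[\tilde Y_n\le a]=\bP[W\le a]$; letting $a\to0$ yields $\bP[\tilde Y_n=0]\to0$, that is, $Y>0$ asymptotically almost surely. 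The hard part is the third paragraph: showing that $\tilde Y_n$ is genuinely $L^2$-close to $f_k$. This hinges on pinning $\bE[\tilde Y_n^2]$ to $\bE[W^2]$, which succeeds exactly because the Cauchy--Schwarz lower bound meets the upper bound furnished by hypothesis~(3) --- the tightness of that inequality is the crux of the method --- and it also requires justifying the term-by-term passage to the limit in the binomial expansion, which rests on the uniform boundedness of the factorial moments of the $X_i$.
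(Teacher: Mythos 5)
The paper does not actually prove Theorem~\ref{key} --- it is quoted as a black box, with pointers to Robinson--Wormald~\cite{RW92}, Wormald's survey~\cite{W99}, and Janson~\cite{S} for the proof --- and your argument is a faithful reconstruction of Janson's proof, the very reference the paper defers to. The structure is correct: the $L^2$-bounded exponential martingale $W_k\to W$ with $W>0$ a.s.\ via Kakutani, the $L^2$-approximation of $\tilde Y_n$ by $f_k$ forced by the coincidence of the Cauchy--Schwarz lower bound with the hypothesis-(3) upper bound, and the bounded-Lipschitz weak-convergence finish; the one technical debt you rightly flag --- justifying the term-by-term passage to the limit in the binomial expansion of $(1+\delta_i)^{X_i}$, which for $\delta_i<0$ is not an absolutely monotone exchange and in Janson's paper requires a truncation plus a uniform tail estimate from the Poisson factorial moments --- is left as a sketch, but it is an acknowledged gap in exactly the right place.
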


As in most applications of this method in the literature, we let $X_j$ denote the number of cycles of length $j$ in the multigraph corresponding to a random element of $\cP_{n,4}$.
Here, for $j \geq 1$, $X_1, X_2, \ldots, X_j$ are asymptotically independent Poisson random variables and 
$$\bE[X_j] \sim \lambda_j := \frac{3^j}{2 \cdot j}.$$ This is an immediate consequence of the following theorem of Bollob\'{a}s.

\begin{thm}[Bollob\'{a}s 1980~\cite{B80}]\label{Boll}
For $d$ fixed, let $X_j$ denote the number of cycles of length $j$ in the random multigraph resulting from a configuration in $\cP_{n,d}$.
For $j \geq 1$, $X_1, \ldots, X_j$ are asymptotically independent Poisson random variables with means $\lambda_j = \frac{(d-1)^j}{2 \cdot j}$.
\end{thm}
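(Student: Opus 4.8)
The plan is to prove Theorem~\ref{Boll} by the method of moments: I will show that for every fixed $m\ge 1$ and all nonnegative integers $\ell_1,\dots,\ell_m$,
\[
\bE\!\left[(X_1)_{\ell_1}\cdots (X_m)_{\ell_m}\right]\ \longrightarrow\ \prod_{j=1}^m \left(\frac{(d-1)^j}{2j}\right)^{\ell_j}\qquad\text{as }n\to\infty,
\]
which is precisely the joint factorial moment of a vector of independent Poisson variables with means $\lambda_j=(d-1)^j/(2j)$. Because the Poisson law is moment-determined, and likewise a finite product of independent Poissons is determined by its joint moments, convergence of all these joint factorial moments upgrades to joint convergence in distribution, yielding simultaneously the Poisson limits and the asymptotic independence.

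The computation rests on two routine configuration-model facts. There are $(dn-1)!!$ configurations, and for any fixed family of $r$ pairwise point-disjoint pairs the probability that all $r$ of them are edges of a uniformly random configuration equals $\prod_{i=0}^{r-1}(dn-1-2i)^{-1}\sim (dn)^{-r}$. I will also count \emph{potential $j$-cycles}: a potential $j$-cycle is a choice of $j$ distinct cells together with an ordered pair of distinct points in each, read around the cycle, with two such data identified when they agree up to a rotation or a reflection; there are $(n)_j\,(d(d-1))^j/(2j)$ of them, the same formula (suitably interpreted) covering loops ($j=1$) and digons ($j=2$). Multiplying this count by the presence probability $\sim (dn)^{-j}$ and simplifying gives $\bE[X_j]\sim (d-1)^j/(2j)=\lambda_j$.

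For a joint factorial moment, $\bE[(X_1)_{\ell_1}\cdots(X_m)_{\ell_m}]$ is the sum, over all ways to pick an ordered $\ell_j$-tuple of \emph{distinct} potential $j$-cycles for each $j$, of the probability that every chosen cycle is present. The dominant contribution comes from tuples whose chosen cycles are pairwise \emph{vertex}-disjoint: there are $\sim\prod_j\big(n^j(d(d-1))^j/(2j)\big)^{\ell_j}$ of these (distinctness and disjointness costing only lower-order corrections), and for such a tuple the $r=\sum_j j\ell_j$ pairs of points involved are globally disjoint, so all are present with probability $\sim (dn)^{-r}$; the product then simplifies to $\prod_j\lambda_j^{\ell_j}$.

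The step I expect to demand the most care is bounding the \emph{overlapping} tuples, those in which two of the chosen cycles share a cell. The heuristic is a balance of powers of $n$: if two chosen cycles jointly span $t$ fewer cells than they would if disjoint and share $s$ of their edges, then the number of placements drops by a factor of order $n^{t}$ while the number of forced edges drops by only $s$, for a net factor $O(n^{s-t})$ relative to the disjoint case. Since two distinct cycles cannot share all of their edges, a short analysis of how a union of paths can sit inside a cycle gives $t\ge s+1\ge 1$, so each overlap pattern loses at least one power of $n$; summing over the finitely many overlap patterns available to a fixed-size tuple leaves an error of $O(1/n)$. With all joint factorial moments shown to converge to $\prod_j\lambda_j^{\ell_j}$, the moment-convergence theorem for the Poisson distribution completes the proof.
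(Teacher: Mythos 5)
The paper does not give a proof of Theorem~\ref{Boll}; it simply cites Bollob\'as~\cite{B80} and uses the conclusion as a black box (indeed it mentions Wormald's survey~\cite{W99} for background on the configuration model). So there is no ``paper proof'' to compare against, and your proposal should be judged on its own terms.

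Judged that way, your sketch is a correct outline of the canonical method-of-moments argument, which is also the route taken in the original source. The counting and probability ingredients all check out: there are $(n)_j\,(d(d-1))^j/(2j)$ potential $j$-cycles (and as you note, this formula degenerates correctly to $n\binom{d}{2}$ loops at $j=1$ and $\binom{n}{2}d^2(d-1)^2/2$ digons at $j=2$); the probability that $r$ point-disjoint pairs all appear in a uniformly random configuration is $\prod_{i=0}^{r-1}(dn-1-2i)^{-1}\sim(dn)^{-r}$; multiplying gives $\bE[X_j]\sim(d-1)^j/(2j)$; and the disjoint-cycle tuples dominate the joint factorial moments, yielding $\prod_j\lambda_j^{\ell_j}$. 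Your overlap bound is also sound: if two distinct cycles share $s\ge 0$ edges and $v\ge 1$ cells, the shared edge set is a disjoint union of $p\ge 0$ paths with $s+p$ vertices (with $p\ge 1$ forced once $s\ge 1$), so $v\ge s+1$ in all overlapping cases and each overlap pattern costs at least one power of $n$. The only thing I would tighten is a remark that this pairwise analysis extends to an arbitrary overlapping collection (by considering the union graph, which has at least one more edge than vertices once any two members touch), and that the number of overlap patterns is bounded in terms of $\ell_1,\dots,\ell_m$ only, so the cumulative error is still $O(1/n)$. With that said, your appeal to moment-determinacy of finite products of Poissons correctly closes the argument, and the proof is essentially complete modulo bookkeeping.
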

%

In Section \ref{falling}, we compute $\bE[YX_{j}]$ as follows by extending orientations of small cycles to orientations of the entire graph.
\begin{lemma}\label{Ratio} For all $j\ge 1$,
$$\frac{\bE[YX_{j}]}{\bE[Y]} \sim \frac{3^j}{2 \cdot j} \left(1 + \left( - \frac{1}{3}\right)^j\right)=\lambda_j \left(1 + \left( - \frac{1}{3}\right)^j\right).$$
\end{lemma}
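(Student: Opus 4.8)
The plan is to carry out the whole computation inside the configuration model, exploiting the standard fact that conditioning on a fixed set of $j$ pairs being present in a uniform element of $\cP_{n,4}$ leaves a uniform pairing on the remaining $4n-2j$ points. The first move is to re-encode $(3,0)$-orientations through \emph{point-orientations}: an assignment of a label ``tail'' or ``head'' to each of the $4n$ points so that every cell contains exactly $3$ or exactly $0$ tails. Since each edge of the underlying multigraph has a unique tail endpoint, the total number of tails equals the number of edges, $2n$, so every valid point-orientation $\sigma$ has exactly $2n/3$ centers and $n/3$ non-centers; and a pairing realizes $\sigma$ precisely when every pair joins a tail to a head, so $\sigma$ is consistent with exactly $(2n)!$ pairings. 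This yields $\bE[Y] = \frac{(2n)!}{(4n-1)!!}\,|\{\sigma\}|$, and, expanding $X_j = \sum_C \mathbf 1[C\subseteq P]$ over potential $j$-cycles $C$, interchanging sums, and running the same count with the $j$ (distinct) cycle points already committed, one gets the exact identity
\[
\frac{\bE[YX_j]}{\bE[Y]} \;=\; \frac{1}{(2n)_j}\,\bE_\sigma\bigl[\#\{C : C\text{ is compatible with }\sigma\}\bigr],
\]
where $\bE_\sigma$ averages uniformly over valid point-orientations and ``compatible'' means every edge of $C$ joins a tail to a head under $\sigma$ --- equivalently, the induced orientation of the cycle edges extends to a full $(3,0)$-orientation realizing $\sigma$.

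Because every valid $\sigma$ has the same numbers of centers and non-centers, the inner count is (up to lower-order terms) the same for all $\sigma$, and it can be evaluated by a transfer-matrix argument around the cycle. Describe a potential $j$-cycle by a cyclic sequence of distinct cells $v_1,\dots,v_j$ together with, at each $v_i$, an ordered pair $(\ell_i,r_i)$ of distinct points of $v_i$ joined to $v_{i-1}$ and to $v_{i+1}$; compatibility then reads $\sigma(r_i)\neq\sigma(\ell_{i+1})$ for all $i$. Summing over the point choices at a single cell with the labels of $\ell_i,r_i$ prescribed gives a $2\times 2$ weight (rows and columns indexed by $T,H$): for a center cell it is $\bigl(\begin{smallmatrix}6&3\\3&0\end{smallmatrix}\bigr)$ --- the $H,H$ entry vanishes since a center has a single head --- and for a non-center cell it is $12$ in the $H,H$ entry and $0$ elsewhere. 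Combining these per-cell weights with the alternation constraint and with the fact that a valid $\sigma$ has $2n/3$ centers and $n/3$ non-centers, and discarding the distinctness constraint on the $v_i$ (which alters the total by a factor $1+O(1/n)$), the sum over cells and point choices factorizes and equals $\tfrac{1}{2j}\mathrm{tr}(M^j)$ with $M=\bigl(\begin{smallmatrix}2n&4n\\4n&2n\end{smallmatrix}\bigr)$. The eigenvalues of $M$ are $6n$ and $-2n$, so $\#\{C : C\text{ compatible with }\sigma\}\sim\tfrac{1}{2j}\bigl((6n)^j+(-2n)^j\bigr)$, and dividing by $(2n)_j\sim(2n)^j$ gives
\[
\frac{\bE[YX_j]}{\bE[Y]} \;\sim\; \frac{(6n)^j+(-2n)^j}{2j\,(2n)^j} \;=\; \frac{3^j+(-1)^j}{2j} \;=\; \lambda_j\Bigl(1+\bigl(-\tfrac{1}{3}\bigr)^j\Bigr).
\]

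The routine work is the bookkeeping with $(2n)_j$ and the double factorials, together with the verification that potential $j$-cycles using a repeated cell contribute only an $O(1/n)$ relative error. I expect the crux to be assembling the transfer matrix correctly: one must keep straight that the per-cell weights count ordered pairs of \emph{points} (not orientations of edges), that the center weight has a forced zero because a center cell has exactly one head, and that threading the compatibility constraint $\sigma(r_i)\neq\sigma(\ell_{i+1})$ around the cycle is exactly what turns a product of per-cell weights into $\mathrm{tr}(M^j)$. Everything then rests on the spectrum of $M$: the dominant eigenvalue $6n$ supplies the leading term $1$, and the eigenvalue $-2n$ supplies the correction $(-1/3)^j$, with the alternating sign built in.
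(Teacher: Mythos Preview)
Your argument is correct and arrives at the same conclusion, but by a genuinely different route from the paper. The paper fixes an \emph{oriented} $j$-cycle, classifies it by the number $s$ of sources (equivalently sinks), counts completions to a full signature and pairing for each $s$, and obtains
\[
\frac{\bE[YX_j]}{\bE[Y]} \;\sim\; \frac{1}{j}\sum_{s=0}^{\lfloor j/2\rfloor}\binom{j}{2s}\,2^{2s},
\]
which it then collapses via the polynomial identity $\sum_s \binom{j}{2s}x^{2s}=\tfrac12\bigl((1+x)^j+(1-x)^j\bigr)$ at $x=2$. You invert the roles: fix a signature $\sigma$, count the $j$-cycles compatible with it, and package the count as $\frac{1}{2j}\mathrm{tr}(M^j)$ for a single $2\times 2$ transfer matrix $M$ assembled from the per-cell weight matrices $\bigl(\begin{smallmatrix}6&3\\3&0\end{smallmatrix}\bigr)$ (centers) and $\bigl(\begin{smallmatrix}0&0\\0&12\end{smallmatrix}\bigr)$ (leaves), threaded through the alternation constraint. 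Your exact identity $\bE[YX_j]/\bE[Y]=(2n)_j^{-1}\,\bE_\sigma\bigl[\#\{C:C\text{ compatible with }\sigma\}\bigr]$ is clean and bypasses the source/sink bookkeeping entirely; moreover, since every valid $\sigma$ has the same center/leaf counts, the inner quantity is in fact independent of $\sigma$, so the average is cosmetic. The payoff is that the correction $\delta_j=(-1/3)^j$ appears transparently as the eigenvalue ratio $-2n/6n$, whereas in the paper it emerges only after the generating-function trick. The paper's approach is more elementary and self-contained; yours is more structural and would adapt more readily to other $(k,0)$-orientation problems or to $d\ne 4$. Both treat the distinctness of the cycle cells the same way, as a $1+O(1/n)$ correction.
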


An easy observation from the first examinations of random graphs is that, for any fixed subgraph $H$ with more edges than vertices, a multigraph corresponding to a random element of $\cP_{n,4}$ asymptotically almost surely contains no subgraph isomorphic to $H$.  Informally speaking, we would not expect to have two cycles sharing edges (or for that matter vertices).  Therefore, we concentrate on disjoint cycles and roughly think of them as being independent.  These observations combined with Lemma~\ref{Ratio} imply the following more general form of Lemma~\ref{Ratio}, which computes the joint factorial moments.

\begin{cor}\label{Joint} 
For all $j\ge 1$, if $(\ell_1, \ldots, \ell_j)$ is a sequence of non-negative integers, then
$$\frac{\bE\left[ Y(X_1)_{\ell_1} \ldots (X_j)_{\ell_j}\right]}{\bE[Y]} \sim \prod_{i=1}^j \left(\frac{3^i}{2 \cdot i}  \left(1 + \left( - \frac{1}{3}\right)^i\right) \right)^{\ell_i}=\prod_{i=1}^j \left(\lambda_i \left(1 + \left( - \frac{1}{3}\right)^i\right) \right)^{\ell_i}.$$
\end{cor}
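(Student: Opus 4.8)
The plan is to derive Corollary~\ref{Joint} from Lemma~\ref{Ratio} by formalizing the heuristic that disjoint short cycles behave independently with respect to $Y$. First I would reduce the joint factorial moment to a sum over ordered tuples of \emph{vertex-disjoint} cycles. Recall $(X_i)_{\ell_i}$ counts ordered $\ell_i$-tuples of distinct $i$-cycles in the random multigraph, so $\bE\left[ Y(X_1)_{\ell_1}\cdots(X_j)_{\ell_j}\right]$ expands as a sum, over all ways of choosing an ordered collection of $\sum_i \ell_i$ cycles (with $\ell_i$ of them of length $i$), of the expected number of $(3,0)$-orientations of configurations that contain all of these cycles. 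Split this sum according to whether the chosen cycles are pairwise vertex-disjoint or not. For the non-disjoint terms, the union of the cycles is a fixed graph $H$ with at least as many edges as vertices (indeed strictly more unless two cycles coincide, which is excluded); the standard first-moment bound on the number of copies of such $H$ in $\cP_{n,4}$ shows the expected number of such configurations is $O(1/n)$ times $\bE[Y]$ at worst, so these terms are negligible after dividing by $\bE[Y]$. This reduces the problem to summing over vertex-disjoint tuples.

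Next I would handle the vertex-disjoint sum by a telescoping / induction argument on $m := \sum_i \ell_i$. The key multiplicative step is: given any fixed collection $\mathcal{C}$ of $m-1$ vertex-disjoint short cycles, the conditional expectation $\bE[Y \mid \mathcal{C} \subseteq \text{config}]$ times the number of choices for one more disjoint $i$-cycle is asymptotically $\lambda_i(1+(-1/3)^i)$ times $\bE[Y\mid \mathcal C]$; summing the already-chosen $m-1$ cycles back up and using the induction hypothesis then yields the product formula. Concretely, the argument underlying Lemma~\ref{Ratio} — conditioning on the $i$-cycle, counting orientations of the cycle that extend to $(3,0)$-orientations (the factor $(1+(-1/3)^i)$ comes from the two "consistent" ways to orient a cycle under the $(3,0)$ constraint together with the $(-1/3)^i$ correction), and observing that deleting the cycle leaves a near-configuration on $n-i$ cells whose orientation count is controlled — goes through essentially verbatim in the presence of a bounded number $m-1$ of other vertex-disjoint cycles, since those occupy only $O(1)$ cells and affect $\bE[Y]$ only through lower-order corrections that cancel in the ratio. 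I would phrase this as: the proof of Lemma~\ref{Ratio} localizes to the cycle being added, so the same asymptotic holds for $\bE[Y X_i \mid \mathcal{C}]/\bE[Y\mid\mathcal{C}]$ uniformly over all fixed $\mathcal{C}$.

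Finally, I would assemble the pieces: by the reduction step the left-hand side is asymptotic to $\frac{1}{\bE[Y]}\sum \bE[Y\cdot\mathbf{1}[\mathcal C\subseteq\text{config}]]$ over ordered vertex-disjoint tuples $\mathcal C$ with the prescribed length multiplicities; the telescoping step evaluates this to $\prod_{i=1}^j(\lambda_i(1+(-1/3)^i))^{\ell_i}$, with the $\ell_i!$ from unordering the same-length cycles absorbed correctly because we are counting ordered tuples (matching the falling factorial $(X_i)_{\ell_i}$ exactly). The main obstacle I anticipate is making the "localization" in the telescoping step fully rigorous: one must show that the asymptotic in Lemma~\ref{Ratio} is \emph{uniform} over configurations that already contain $O(1)$ fixed vertex-disjoint cycles — i.e., that adding a bounded number of disjoint short cycles perturbs neither $\bE[Y]$ nor the cycle-extension count by more than a $1+o(1)$ factor in the relevant ratio. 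This is intuitively clear (a bounded number of cells is asymptotically irrelevant) but requires care with the Stirling-type estimates from Sections~\ref{expect} and~\ref{falling}; alternatively one can sidestep it by a direct inclusion–exclusion expansion of the whole factorial moment and a single application of the counting arguments already developed, which is likely how the authors proceed given the phrasing "combined with Lemma~\ref{Ratio} imply."
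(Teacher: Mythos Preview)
Your proposal is correct and follows the same approach as the paper, which in fact offers only the informal paragraph preceding the corollary (non-disjoint cycle tuples are negligible because their union has more edges than vertices, and for disjoint tuples the counting argument behind Lemma~\ref{Ratio} extends multiplicatively). Your write-up is considerably more detailed than what the paper provides, and your closing remark about a direct extension of the counting in Lemma~\ref{JointFormula} to a fixed collection of disjoint cycles is indeed the cleanest way to make the argument rigorous and is standard in this literature.
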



From Lemma~\ref{Ratio}, $\frac{\bE[YX_{j}]}{\bE[Y]} \sim \lambda_j \left(1 + \left( - \frac{1}{3}\right)^j\right)$; thus, we set $\delta_j := - \left(\frac{1}{3}\right)^j > -1$ and verify the following.
\begin{lemma}\label{Verify}
$$\sum_{i\ge 1} \lambda_i \delta_i^2 < \infty \text{ and } \exp\left(\sum_{i \ge 1} \lambda_i \delta_i^2\right) = \sqrt{\frac{3}{2}} \sim \frac{\bE[Y^2]}{\bE[Y]^2}.$$
\end{lemma}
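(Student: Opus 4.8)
The claim has two parts: convergence of the series $\sum_{i\ge1}\lambda_i\delta_i^2$, and the identity $\exp\left(\sum_{i\ge1}\lambda_i\delta_i^2\right)=\sqrt{3/2}$, which then matches $\bE[Y^2]/\bE[Y]^2$ by Corollary \ref{Ex2b}. Both are essentially a direct computation once we substitute the known values $\lambda_i = 3^i/(2i)$ and $\delta_i = -(1/3)^i$. The plan is simply to carry out this substitution and recognize the resulting power series as the Taylor expansion of a logarithm.

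First I would compute the general term: $\lambda_i\delta_i^2 = \frac{3^i}{2i}\cdot\frac{1}{3^{2i}} = \frac{1}{2i}\cdot\frac{1}{3^i} = \frac{1}{2i\cdot 3^i}$. Summing over $i\ge1$ gives
\begin{equation*}
\sum_{i\ge1}\lambda_i\delta_i^2 = \frac12\sum_{i\ge1}\frac{(1/3)^i}{i} = \frac12\left(-\ln\left(1-\tfrac13\right)\right) = -\frac12\ln\frac23 = \frac12\ln\frac32,
\end{equation*}
using the standard identity $\sum_{i\ge1} x^i/i = -\ln(1-x)$ valid for $|x|<1$ (here $x=1/3$), which also immediately gives convergence. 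Exponentiating yields $\exp\left(\sum_{i\ge1}\lambda_i\delta_i^2\right) = \exp\left(\tfrac12\ln\tfrac32\right) = \sqrt{3/2}$, as required. The final $\sim$ in the statement is then exactly the content of Corollary \ref{Ex2b}, so no further work is needed there.

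There is essentially no obstacle here; the only thing to be careful about is confirming that $|\delta_i|<1$ for all $i$ (so that the hypotheses of Theorem \ref{key} are met and the log series converges), which is clear since $\delta_i=-(1/3)^i\in(-1,0)$. I should also remark that this is the step where the three preceding lemmas (Lemma \ref{Ex}, Lemma \ref{Ex2}, and Lemma \ref{Ratio}, via Corollary \ref{Joint}) combine: together with Theorem \ref{Boll} they verify hypotheses (1.), (2.), and (3.) of Theorem \ref{key} respectively, and Lemma \ref{Verify} closes the chain by checking that the variance ratio $\sqrt{3/2}$ does not exceed $\exp\left(\sum_i\lambda_i\delta_i^2\right)=\sqrt{3/2}$ (with equality, hence the $o(1)$ slack in hypothesis (3.) suffices). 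Invoking Theorem \ref{key} then gives $Y>0$ a.a.s., which is Theorem \ref{MainThm}.
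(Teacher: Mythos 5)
Your proof is correct and matches the paper's own argument: both substitute $\lambda_i = 3^i/(2i)$ and $\delta_i = -(1/3)^i$, sum the resulting series via $\sum_{i\ge 1} x^i/i = -\ln(1-x)$ at $x=1/3$, and exponentiate to get $\sqrt{3/2}$, with the final $\sim$ coming from Corollary~\ref{Ex2b}. Your closing remark correctly places the lemma's role within the application of Theorem~\ref{key}.
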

\begin{proof}  Recall that $\lambda_i = \frac{3^i}{2\cdot i}$.  Using that $\sum_{i\ge 1} \frac{x^i}{i} = -\ln(1-x)$ for all $-1 < x < 1$, we obtain that
$$\sum_{i\ge 1} \lambda_i \delta_i^2 = \sum_{i\ge 1} \frac{3^i}{2\cdot i} \cdot \left( - \frac{1}{3}\right)^{2i}
= \frac{1}{2} \sum_{i\ge 1} \frac{\left(\frac{1}{3}\right)^i}{i} 
=\frac{1}{2} \left(-\ln(2/3)\right) < \infty.$$
Thus,
$$\exp\left(\sum_{i \ge 1} \lambda_i \delta_i^2\right) =\exp\left( \frac{1}{2} \left(-\ln(2/3)\right) \right) = \sqrt{\frac{3}{2}}.$$
\end{proof}

Modulo proofs of Lemma~\ref{Ex} (Section~\ref{expect}), Lemma~\ref{Ex2} (Section~\ref{var}), and Lemma~\ref{Ratio} (Section~\ref{falling}), Theorem~\ref{key} now implies our main result as follows.\\

\noindent\emph{Proof of Main Result (Theorem~\ref{Main}).}
Let $3$ divide $n$ and $Y = Y(n)$ denote the number of $(3,0)$-orientations of a random element of $\cP_{n,4}$. Let $X_j$ denote the number of cycles of length $j$ in a random element of $\cP_{n,4}$. We apply Theorem~\ref{key} to $Y$ and $X_j$. Note that (1.) holds by Theorem~\ref{Boll}, (2.) holds by Corollary~\ref{Joint}, and (3.) holds by Lemma~\ref{Verify}. Thus  $Y > 0$ asymptotically almost surely, as desired. \qed

\section{Expected Number of Decompositions}\label{expect}
  
We let $Y = Y(n)$ denote the number of $(3,0)$-orientations of a random element of $\cP_{n,4}$.  We will make use of the following definition.  Given $n$ cells each consisting of 4 points, a \emph{signature} is a set of $2n/3$ points no two belonging to the same cell.  We call these points the \emph{special points} of the signature.  We refer to a cell as a \emph{center} if it contains a special point and as a \emph{leaf} otherwise.  
We say a point is an \emph{in-point} if it is special or in a leaf of the signature and say it is an \emph{out-point} otherwise.

We say a configuration in $\cP_{n,4}$ \emph{extends} a signature if the configuration forms a perfect matching between the in-points and the out-points of the signature.
 We note that a $(3,0)$-orientation extends exactly one signature.  
In this signature, the \emph{centers} correspond to the $2n/3$ cells of out-degree 3 in the orientation (here the special point in each center is the head of the only incoming edge) and the \emph{leaves} correspond to the remaining $n/3$ cells of out-degree 0 in the orientation.  

To prove Lemma~\ref{Ex} though, we switch the order of counting and instead count the number of configurations that extend a given signature. We are now prepared to prove Lemma~\ref{Ex} as follows.\\

\begin{proof}[Proof of Lemma~\ref{Ex}.]
There are a total of ${{n}\choose{2n/3}}4^{2n/3}$ signatures for $n$ cells of 4 points. Recall that a configuration extends a given signature if and only if the configuration matches the in-points of the signature with its out-points. Thus, there are $(\frac{4n}{2})! = (2n)!$ configurations that extend a given signature to a $(3,0)$-orientation. Using Stirling's approximation $s! \sim \sqrt{2\pi s}\left(\frac{s}{e}\right)^s,$ we see that
\[ \bE[Y] = \frac{{{n}\choose{2n/3}} 4^{2n/3}(2n)!}{M(4n)} = 4^{5n/3}\frac{{{n}\choose{2n/3}}}{{{4n}\choose{2n}}}
\sim \frac{3}{\sqrt{2}}\left(\frac{ 3^{3}}{2^{4}}\right)^{n/3} = \frac{3}{\sqrt{2}}\left(\frac{27}{16}\right)^{n/3}>0,\]
\noindent where
\[ M(4n) = \frac{(4n)!}{\left(\frac{4n}{2}\right)!\cdot 2^{(4n)/2}}= \frac{(4n)!}{\left(2n\right)!\cdot 2^{2n}}\] is the number of perfect matchings of $4n$ points.
\end{proof}

\section{The Second Moment Method}\label{var}

In order to calculate $\bE[Y^2]$ for Lemma~\ref{Ex2}, we should count the number of pairs of signatures that a given configuration extends. As in the proof of Lemma~\ref{Ex}, we invert this count by fixing a pair of signatures $\sigma_1$ and $\sigma_2$ and then calculating how many configurations that they both jointly extend. 
To facilitate this count, we consider how the two signatures overlap. One might think that there would be some necessary restriction on how the signatures overlap in order to guarantee the existence of even one configuration that they jointly extend, but strangely this is not the case.

\begin{lemma}\label{FixAB}
For each $A$ and $B$, there are

$$\binom{n}{A,B,\frac{2n}{3}-A-B,  \frac{2n}{3}-A-B, A+B-\frac{n}{3}} \cdot 4^{\frac{4n}{3}-A-B)} \cdot 3^B$$

\noindent
pairs of signatures such that the number of cells that are centers of both signatures with the same special point is $A$ and the number of cells that are centers of both signatures with different special points is $B$.

Furthermore, for each such pair of signatures, there are

$$(3A+2B)!  \cdot (2n-3A-2B)!$$

\noindent
configurations extending both signatures.
\end{lemma}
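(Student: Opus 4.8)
The plan is a two-stage count: first fix the ``overlap pattern'' of an ordered pair of signatures $(\sigma_1,\sigma_2)$ and count how many pairs realize it, then, for a fixed such pair, count the configurations that extend both. (Ordered pairs are what one wants, since $\bE[Y^2]=\sum_{\sigma_1,\sigma_2}\bP[\text{a random configuration extends both}]$.)

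\textbf{Step 1 (counting the pairs).} Classify each of the $n$ cells by its role in $\sigma_1$ and in $\sigma_2$: (i) a center of both with the same special point; (ii) a center of both with different special points; (iii) a center of $\sigma_1$ but a leaf of $\sigma_2$; (iv) a leaf of $\sigma_1$ but a center of $\sigma_2$; (v) a leaf of both. Let $a_1,\dots,a_5$ be their numbers, so $a_1=A$ and $a_2=B$ by definition. Counting the $2n/3$ centers of $\sigma_1$ gives $A+B+a_3=2n/3$; counting those of $\sigma_2$ gives $A+B+a_4=2n/3$; counting all cells gives $a_1+\dots+a_5=n$. Hence $a_3=a_4=\frac{2n}{3}-A-B$ and $a_5=A+B-\frac{n}{3}$ (so the count is $0$ unless $\frac{n}{3}\le A+B\le\frac{2n}{3}$). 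The number of ways to decide which cell has which type is $\binom{n}{A,\,B,\,\frac{2n}{3}-A-B,\,\frac{2n}{3}-A-B,\,A+B-\frac{n}{3}}$. Then, independently in each cell, we pick the special point(s): $4$ ways in a type-(i) cell, $4\cdot 3$ ways in a type-(ii) cell (choose the $\sigma_1$-special point, then a different $\sigma_2$-special point), $4$ ways in a type-(iii) or type-(iv) cell, and $1$ way in a type-(v) cell. This contributes $4^{A}\,12^{B}\,4^{a_3}\,4^{a_4}=3^{B}\,4^{A+B+2(\frac{2n}{3}-A-B)}=3^{B}\,4^{\frac{4n}{3}-A-B}$, giving the first formula.

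\textbf{Step 2 (counting the configurations).} Fix such a pair. Each point is an in-point or an out-point of $\sigma_1$, and likewise of $\sigma_2$; record this as a type in $\{II,IO,OI,OO\}$. A configuration extends $\sigma_i$ precisely when every pair it contains joins an in-point of $\sigma_i$ to an out-point of $\sigma_i$; hence it extends both exactly when every pair joins two points that are complementary in \emph{both} coordinates. The only complementary type-pairs are $\{II,OO\}$ and $\{IO,OI\}$, so a configuration extending both is exactly a perfect matching between the $II$-points and the $OO$-points, together with a perfect matching between the $IO$-points and the $OI$-points. Reading off each cell type — a type-(i) cell gives one $II$ and three $OO$; a type-(ii) cell gives one $IO$ (its $\sigma_1$-special point), one $OI$ (its $\sigma_2$-special point), and two $OO$; a type-(iii) cell gives one $II$ (its special point, which lies in a leaf of $\sigma_2$) and three $OI$; a type-(iv) cell gives one $II$ and three $IO$; a type-(v) cell gives four $II$ — and substituting the values of $a_3,a_4,a_5$ from Step 1, one finds $|II|=|OO|=3A+2B$ and $|IO|=|OI|=2n-3A-2B$. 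In particular both matchings always exist (the ``strange'' fact that every overlap pattern is jointly extendable), and their number is $(3A+2B)!\cdot(2n-3A-2B)!$, as claimed.

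\textbf{Main obstacle.} The argument is pure bookkeeping; the one point requiring care is the exhaustive case analysis assigning an in/out type to each point of each cell type, together with the algebraic cancellation that makes $|II|=|OO|$ and $|IO|=|OI|$ hold identically in $A$, $B$, $n$ — which is exactly why the joint-extension count is always a positive product of factorials rather than occasionally zero.
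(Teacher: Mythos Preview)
Your proof is correct and follows essentially the same approach as the paper: partition the cells into the five roles determined by $(\sigma_1,\sigma_2)$, count special-point choices per role, and for the second assertion classify points into the four in/out combinations to reduce the joint-extension count to two independent bijections. Your write-up is in fact somewhat more explicit than the paper's --- you spell out the $II/IO/OI/OO$ contribution of each cell type --- whereas the paper counts only the $OO$ points directly and obtains the other three counts by subtraction from the totals of $2n$ in-points and $2n$ out-points in each signature.
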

\begin{proof}[Proof of Lemma~\ref{FixAB}.]
Let $A$ denote the number of cells that are centers in both $\sigma_1$ and $\sigma_2$ and have the same special point. Let $B$ denote the number of cells that are centers in both $\sigma_1$ and $\sigma_2$ and have different special points.  Note that $A + B$ is maximized when all centers in $\sigma_1$ are centers in $\sigma_2$ as well; thus, $A + B \leq \frac{2n}{3}$.  

We see that we may write $\bE[Y^2]$ in terms of $n$, $A$, and $B$ as follows.  Let $C_1$ denote the set of centers of $\sigma_1$ and $C_2$ the set of centers of $\sigma_2$.  Note that $|C_1\cap C_2| = A+B$. Hence $|C_1\setminus C_2| = \frac{2n}{3} - A - B = |C_2\setminus C_1|$. The set of remaining cells is precisely the set of cells that are leaves in both signatures. There are a total of $n-|C_1\cup C_2|= A+B-\frac{n}{3}$ of them. Since this is non-negative, we have that $A + B \geq \frac{n}{3}$. Hence, we see that for each possible value of $A$ and $B$, there are $$\binom{n}{A,B,\frac{2n}{3}-A-B,  \frac{2n}{3}-A-B, A+B-\frac{n}{3}}$$ ways to partition the cells into these types.

There are $4^{2(\frac{2n}{3}-A-B)}$ ways to pick points that are in a leaf in one signature but a special point in the other signature.  There are $4^A$ choices of special points from the centers in both $\sigma_1$ and $\sigma_2$ that have the same special point.  Likewise, there are $(4\cdot 3)^B$ ways to choose special points for the centers of both $\sigma_1$ and $\sigma_2$ with different special points. This proves the first assertion.

For the second assertion, note that a configuration extends both $\sigma_1$ and $\sigma_2$ if and only if the points that are in-points of both $\sigma_1$ and $\sigma_2$ are matched to points that are out-points of both $\sigma_1$ and $\sigma_2$, and the points that are in-points of $\sigma_1$ and out-points of $\sigma_2$ get matched to points that are out-points of $\sigma_1$ but in-points of $\sigma_2$ and vice versa. There are $3A+2B$ points that are out-points of both signatures since $3A$ of them are contained in centers with the same special point and $2B$ of them are contained in centers with different special points. Since there are $2n$ in-points and $2n$ out-points in each signature, it follows that there are $2n - 3A - 2B$ points that are an out-point in $\sigma_1$ and an in-point in $\sigma_2$. Similarly there are $2n-3A-2B$ points that are an in-point in $\sigma_1$ and an outpoint in $\sigma_2$. Hence, there are $3A+2B$ points that are in-points in both signatures. Thus there are $(3A+2B)!$ ways to match the the points that are out-points of both signatures to the points that are in-points of both signatures and $(2n-3A-2B)!$ ways to match the remaining points. The second assertion now follows.  
\end{proof}

\begin{cor}\label{RangeAB}
$$\bE[Y^2] = \sum_{A,B} \frac{(2n)! \cdot n! \cdot 4^{\frac{7n}{3}} \cdot 3^B \cdot (3A+2B)!  \cdot (2n-3A-2B)!  }{(4n)! \cdot 4^{A+B}\cdot A!\cdot B!\cdot (\left(\frac{2n}{3}-A-B\right)!)^2 \cdot \left(A+B-\frac{n}{3}\right)!} $$
\noindent
where $A$ and $B$ are non-negative integers such that $\frac{n}{3} \le A + B \le \frac{2n}{3}$.
\end{cor}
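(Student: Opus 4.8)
The plan is to derive Corollary~\ref{RangeAB} directly from Lemma~\ref{FixAB} and the signature/orientation correspondence of Section~\ref{expect}, so the argument is essentially bookkeeping. First I would record that $Y$ can be written as a sum of indicators over signatures: since every $(3,0)$-orientation of a configuration $\pi$ extends exactly one signature, and conversely every configuration extending a fixed signature $\sigma$ gives rise to exactly one $(3,0)$-orientation (orient each matched pair from its out-point toward its in-point, so that every center has out-degree $3$ and every leaf has out-degree $0$), we have $Y(\pi)=\sum_{\sigma}\mathbf{1}[\pi\text{ extends }\sigma]$, the sum ranging over all signatures. Squaring and taking expectations over a uniformly random $\pi\in\cP_{n,4}$, linearity gives
$$\bE[Y^2]=\sum_{\sigma_1,\sigma_2}\bP[\pi\text{ extends both }\sigma_1,\sigma_2]=\frac{1}{M(4n)}\sum_{\sigma_1,\sigma_2}\#\{\pi:\pi\text{ extends both }\sigma_1,\sigma_2\},$$
where $M(4n)=(4n)!/((2n)!\,2^{2n})$ is the number of perfect matchings on $4n$ points.

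Next I would sort the pairs $(\sigma_1,\sigma_2)$ by the two statistics $A$ and $B$ of Lemma~\ref{FixAB} (common centers with equal, respectively distinct, special points). That lemma supplies both the number of pairs with a given $(A,B)$ and the number of configurations extending any such pair, so
$$\bE[Y^2]=\frac{1}{M(4n)}\sum_{A,B}\binom{n}{A,B,\tfrac{2n}{3}-A-B,\tfrac{2n}{3}-A-B,A+B-\tfrac{n}{3}}\,4^{\frac{4n}{3}-A-B}\,3^B\,(3A+2B)!\,(2n-3A-2B)!.$$
It remains only to simplify: expand $M(4n)$, write the multinomial coefficient as $n!/\bigl(A!\,B!\,((\tfrac{2n}{3}-A-B)!)^2\,(A+B-\tfrac{n}{3})!\bigr)$, and collect the powers of two via $2^{2n}\cdot 4^{\frac{4n}{3}-A-B}=2^{\frac{14n}{3}-2A-2B}=4^{\frac{7n}{3}}/4^{A+B}$ (using $2n+\tfrac{8n}{3}=\tfrac{14n}{3}$); this produces exactly the claimed summand.

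For the range of summation I would invoke the bounds already observed inside the proof of Lemma~\ref{FixAB}: $A+B=|C_1\cap C_2|\le\tfrac{2n}{3}$ because each signature has $\tfrac{2n}{3}$ centers, and $A+B\ge\tfrac{n}{3}$ because $A+B-\tfrac{n}{3}$ counts the cells that are leaves in both signatures and so must be non-negative; together with $A,B\ge 0$ this gives precisely $\tfrac{n}{3}\le A+B\le\tfrac{2n}{3}$. There is no substantial obstacle in this corollary—the content is entirely in Lemma~\ref{FixAB}—but the one place that genuinely needs care is the two-directional correspondence between $(3,0)$-orientations and (signature, extending configuration) pairs, since it is what guarantees that writing $Y$ as a sum of signature indicators neither over- nor under-counts, and hence that the displayed expression for $\bE[Y^2]$ is an exact identity rather than merely an estimate.
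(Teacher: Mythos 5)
Your proposal is correct and follows the same route as the paper: write $\bE[Y^2]=\frac{1}{M(4n)}\sum_{\sigma_1,\sigma_2}\#\{\pi:\pi\text{ extends both}\}$, stratify pairs of signatures by $(A,B)$, plug in Lemma~\ref{FixAB}, and simplify the powers of $2$ and the multinomial coefficient. You also correctly flag (and justify) the one point the paper handles by an earlier remark, namely the bijection between $(3,0)$-orientations of a configuration and the signatures it extends, which is what makes $Y=\sum_\sigma\mathbf{1}[\pi\text{ extends }\sigma]$ exact; this matches the paper's reasoning.
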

\begin{proof}[Proof of Corollary~\ref{RangeAB}.]
The computation goes as follows. We range over all possibilities of how two signatures may overlap, i.e. we range over $A$ and $B$. Using Lemma~\ref{FixAB} and tidying the formula gives that

\begin{align*}
\bE[Y^2] &=  \frac{1}{M(4n)} \sum_{\text{pairs of signatures}} \#\text{configurations extending both} \\
           &=  \frac{2^{2n}(2n)!}{(4n)!}
           \sum_{A,B} \binom{n}{A,B,\frac{2n}{3}-A-B,  \frac{2n}{3}-A-B, A+B-\frac{n}{3}} \\
&~          \cdot 4^{2(\frac{2n}{3}-A-B)}\cdot 4^A \cdot (4 \cdot 3)^B \cdot (3A+2B)!  \cdot (2n-3A-2B)!  \\
&=           \sum_{A,B} \frac{(2n)! \cdot n! \cdot 4^{\frac{7n}{3}} \cdot 3^B \cdot (3A+2B)!  \cdot (2n-3A-2B)!  }{(4n)! \cdot 4^{A+B}\cdot A!\cdot B!\cdot (\left(\frac{2n}{3}-A-B\right)!)^2 \cdot \left(A+B-\frac{n}{3}\right)!} 
\end{align*}
\end{proof}

It is useful to normalize $A$ and $B$ by letting $a=A/n$ and $b=B/n$. We let $L$ denote the region 

$$L:=\left\{\left(\frac{A}{n},\frac{B}{n}\right) \in \bR^2: A, B \in \mathbb{Z}\cap [0,2n/3] \text{ and } \frac{n}{3} \leq A+B \leq \frac{2n}{3} \right\}.$$ 

Thus the sum in Corollary~\ref{RangeAB} ranges over $L$. We will also need to consider points of $L$ but without the restriction of $A$ and $B$ being integral. Thus we let $R$ denote the region 

$$R:=\left\{\left(a,b\right) \in \bR^2: 0\leq a,b \leq \frac{2}{3} \text{ and } \frac{1}{3} \leq a+b \leq \frac{2}{3} \right\}.$$  

To prove Lemma~\ref{Ex2}, we will apply Stirling's formula to the formula in Corollary~\ref{RangeAB}. Doing so will yield an exponential part and a polynomial part.

To that end, we introduce the two following functions. First for the exponential part, let us define
\begin{align*}
f(a,b) := & b(\ln3 - \ln4) + (2-3a-2b)\ln(2-3a-2b)+ (3a+2b)\ln(3a+2b)- a (\ln a+\ln4)\\
 & - b \ln b - 2\left(\frac{2}{3}-a-b\right)\ln\left(\frac{2}{3}-a-b\right)- \left(a+b-\frac{1}{3}\right)\ln\left(a+b-\frac{1}{3}\right) - \frac{2}{3} \ln4,
\end{align*}
and therefore,
\begin{align*}
e^{f(a,b)} = & \ \frac{3^b \cdot (2-3a-2b)^{(2-3a-2b)} \cdot (3a+2b)^{(3a+2b)}}{4^{\frac{2}{3}+a+b} \cdot a^a \cdot b^b \cdot \left(\frac{2}{3}-a-b\right)^{2\left(\frac{2}{3}-a-b\right)}\cdot \left(a+b-\frac{1}{3}\right)^{\left(a+b-\frac{1}{3}\right)}}.
\end{align*}

Now for the polynomial part, let us define $$g(a,b):=\frac{1}{2 \pi} \cdot \sqrt{ \frac{(3a+2b)\cdot(2-3a-2b)}{  2\cdot (a+\frac{1}{6n})\cdot (b+\frac{1}{6n})\cdot \left(\frac{2}{3}-a-b+\frac{1}{6n}\right)^2\cdot \left(a+b-\frac{1}{3}+\frac{1}{6n}\right)       }    }.$$

We are now ready to apply Stirling's formula to Corollary~\ref{Ex2}, where recall that the formula is $s! = \left(1 + O\left(\frac{1}{s} \right) \right) \sqrt{2\pi s}\left(\frac{s}{e}\right)^s$. In fact, we apply a variant Stirling's formula known as Gosper's formula which is $s! = \left(1 + O\left(\frac{1}{s} \right) \right) \sqrt{\pi (2s+\frac{1}{3})}\left(\frac{s}{e}\right)^s$. We do this because Stirling's formula approximates $0!$ as $0$ instead of $1$, which is unwieldy for division.

\begin{lemma}\label{Ex2AB}
$$\bE[Y^2] = \sum_{(a,b)\in L} S(a,b) \cdot \frac{g(a,b)}{n} \cdot e^{f(a,b) n}$$
where $S(a,b)$ is the error factor arising from the applications of Gospers's formula.
\end{lemma}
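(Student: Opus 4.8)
The plan is to start from the exact formula for $\bE[Y^2]$ in Corollary~\ref{RangeAB} and apply Gosper's formula termwise to each factorial appearing in the summand. First I would rewrite the summand of Corollary~\ref{RangeAB} by substituting $A = an$, $B = bn$, so that every factorial is of the form $(cn)!$ (or $(cn + d)!$ with $d \in \{0\}$, possibly shifted) for some linear function $c = c(a,b)$ of $a$ and $b$; explicitly the factorials are $(2n)!$, $n!$, $(3A+2B)! = ((3a+2b)n)!$, $(2n-3A-2B)! = ((2-3a-2b)n)!$ in the numerator, and $(4n)!$, $A! = (an)!$, $B! = (bn)!$, $(\frac{2n}{3}-A-B)! = ((\frac{2}{3}-a-b)n)!$ (squared), and $(A+B-\frac{n}{3})! = ((a+b-\frac{1}{3})n)!$ in the denominator. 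Applying $s! = (1+O(1/s))\sqrt{\pi(2s+\tfrac13)}(s/e)^s$ to each of these, the $(s/e)^s$ factors combine into an exponential term $e^{h(a,b)n}$ for some function $h$, and the $\sqrt{\pi(2s+\tfrac13)}$ factors combine into a square-root term; the $4^{\frac{7n}{3}}$, $3^B$, $4^{A+B}$ prefactors contribute additional exponential factors.

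Next I would check that, after collecting exponents, the exponential part matches $e^{f(a,b)n}$ exactly; this is the bookkeeping heart of the argument. The $(s/e)^s$ contribution from a factorial $((cn)!)$ is $(cn/e)^{cn} = e^{cn\ln(cn) - cn} = e^{cn\ln c}\cdot e^{cn\ln n}\cdot e^{-cn}$. Because $Y^2$ counts configurations, the linear combination of the $c$'s with the appropriate signs must sum to zero (both $\sum c \ln n$ and $\sum(-c)$ cancel, reflecting that the total number of points is balanced), so only the $e^{cn\ln c}$ pieces survive, together with the constant-base exponentials from $4^{7n/3}$, $3^{bn}$, $4^{-(a+b)n}$. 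Matching the surviving terms against the definition of $f(a,b)$ — noting the $b(\ln 3 - \ln 4)$ term comes from $3^B/4^B$, the $-a\ln 4$ term from part of $4^{-(a+b)n}$ combined with $4^{7n/3}$, and the $-\frac{2}{3}\ln 4$ constant likewise — should give exact agreement. Then I would verify that the square-root factors, after using that each factorial of the form $(cn)!$ with $c>0$ contributes $\sqrt{\pi(2cn+\tfrac13)} = \sqrt{2\pi n}\sqrt{c + \tfrac{1}{6n}}$, combine (with the $1/n$ pulled out, and the six denominator square roots beating the four numerator ones by a net factor of $n^{-1}$ after cancellation of $(2\pi n)$ powers) into exactly $\frac{g(a,b)}{n}$; here the shifts $+\tfrac{1}{6n}$ inside $g$ are precisely the Gosper corrections, which is why Gosper's rather than Stirling's formula is used — it keeps $0! \mapsto \sqrt{\pi/3}$ rather than $0$, so the division by $a^a b^b$ etc. and the square roots of $a$, $b$, $\frac23 - a - b$, $a+b-\frac13$ in $g$ stay well-defined even at the boundary of $L$ where one of these quantities is $0$.

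Finally, the factor $S(a,b)$ is defined to be exactly the accumulated product of the $(1+O(1/s))$ error terms from the eight or so applications of Gosper's formula, one per factorial (with $s$ ranging over $2n$, $n$, $(3a+2b)n$, $(2-3a-2b)n$, $4n$, $an$, $bn$, $(\frac23-a-b)n$, $(a+b-\frac13)n$). With $S(a,b)$ defined this way the claimed identity
$$\bE[Y^2] = \sum_{(a,b)\in L} S(a,b)\cdot\frac{g(a,b)}{n}\cdot e^{f(a,b)n}$$
holds by construction, so the lemma reduces to the verification that the non-error part of the Gosper expansion equals $\frac{g(a,b)}{n}e^{f(a,b)n}$. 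I expect the main obstacle to be purely the careful algebraic bookkeeping of exponents and square-root factors — in particular confirming the cancellation of the $\ln n$ and linear-in-$n$ pieces and the net power of $2\pi n$, and handling the factorials like $an!$ whose argument can be as small as $0$ (where the $\tfrac{1}{6n}$ shifts in $g$ and Gosper's $\sqrt{\pi(2s+\tfrac13)}$ form are what make everything finite and correct). There is no real conceptual difficulty; the content is entirely in getting the constants right so that the subsequent sections can estimate the sum via the maximum of $f$ on $R$.
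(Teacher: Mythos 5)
Your proposal is correct and follows the same approach as the paper: start from the exact expression in Corollary~\ref{RangeAB}, substitute $A=an$, $B=bn$, apply Gosper's formula to each of the factorials, and collect the $(s/e)^s$ parts into $e^{f(a,b)n}$, the $\sqrt{2\pi(s+\frac16)}$ parts (together with the net power of $2\pi n$) into $g(a,b)/n$, and the multiplicative $1+O(1/s)$ errors into $S(a,b)$; the paper's proof is exactly this computation written out in one display. Your remarks on why the $\ln n$ and linear-in-$n$ contributions must cancel, why the exponent count gives a net $\frac{1}{2\pi n}$, and why Gosper's form is preferred over Stirling's at the boundary of $L$ are all consistent with what the paper does (the paper just records the result of the bookkeeping without narrating it).
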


\begin{proof}[Proof of Lemma~\ref{Ex2AB}.]
Using Gosper's formula $s! = \left(1 + O\left(\frac{1}{s} \right) \right) \sqrt{2\pi (s+\frac{1}{6})}\left(\frac{s}{e}\right)^s$, we have the following (where $S\left(\frac{A}{n},\frac{B}{n}\right)$ denotes the error factor arising from using Gosper approximations in the calculation below).

Thus,
\begin{eqnarray*}
\bE[Y^2] &=& \sum_{(\frac{A}{n},\frac{B}{n})\in L} \frac{(2n)! \cdot n! \cdot 4^{\frac{7n}{3}} \cdot 3^B \cdot (3A+2B)!  \cdot (2n-3A-2B)!  }{(4n)! \cdot 4^{A+B}\cdot A!\cdot B!\cdot \left(\frac{2n}{3}-A-B\right)! \cdot  \left(\frac{2n}{3}-A-B\right)!\cdot \left(A+B-\frac{n}{3}\right)!} \\
&=& \sum_{(\frac{A}{n},\frac{B}{n})\in L} \scalebox{1.05}{$ {S\left(\frac{A}{n},\frac{B}{n}\right)} \cdot \sqrt{ \frac{ 2^4 \cdot \pi^ 4\cdot (2n+\frac{1}{6})\cdot (n+\frac{1}{6}) \cdot (3A+2B+\frac{1}{6})\cdot(2n-3A-2B+\frac{1}{6})}{2^6 \cdot \pi^ 6\cdot   (4n+\frac{1}{6}) \cdot (A+\frac{1}{6}) \cdot (B+\frac{1}{6}) \cdot (\frac{2n}{3}-A-B+\frac{1}{6})^2 \cdot (A+B-\frac{n}{3}+\frac{1}{6})       }    }  \cdot e^{f\left(\frac{A}{n},\frac{B}{n}\right) n}$} \\
&\sim & \sum_{(a,b)\in L} \scalebox{1.05}{$S\left(a,b\right) \cdot \frac{1}{2\pi n} \cdot \sqrt{ \frac{(3a+2b)\cdot(2-3a-2b)}{  2\cdot (a+\frac{1}{6n})\cdot (b+\frac{1}{6n})\cdot \left(\frac{2}{3}-a-b+\frac{1}{6n}\right)^2\cdot \left(a+b-\frac{1}{3}+\frac{1}{6n}\right)       }    }  \cdot e^{f\left(a,b\right) n}$}\\
&= & \sum_{(a,b)\in L} S\left(a,b\right) \cdot \frac{g\left(a,b\right)}{n}  \cdot e^{f\left(a,b\right) n}. 
\end{eqnarray*}
\end{proof}


\subsection{Multivariate Calculus}
In order to approximate $\bE[Y^2]$, we first need to determine the global maximum of $f$ on the region $L$ and since we use continuous techniques, we will instead find the global maximum of $f$ on $R$.  To approximate the function $f$, we then will take the Taylor expansion of $f$ around the point attaining the global maximum, since the maximum value (as we will show) is unique.  We extend the definition of $f$ continuously to the boundary of $R$ by defining $x \ln x := 0$ when $x = 0$. We prove the following.

\begin{lemma}\label{global}
The global maximum of $f$ on the region $R$ is $2\ln(3) - \frac{4}{3}\ln(4)$.  This value is uniquely achieved at $P_0=(a_0,b_0)=\left(\frac{1}{9},\frac{1}{3}\right)$. Furthermore, the Hessian matrix, $D^2 f(P_0)$, has determinant 81 and is negative definite.
\end{lemma}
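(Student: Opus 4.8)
The plan is to treat this as a standard constrained-optimization problem in two variables, handled in three stages: locate the critical point in the interior, rule out the boundary of $R$, and then compute the Hessian at $P_0$ to establish negative definiteness and the determinant value. First I would compute the partial derivatives $\partial f/\partial a$ and $\partial f/\partial b$. Since $f$ is a sum of terms of the form $c\cdot h(a,b)\ln h(a,b)$ (plus linear pieces), each such term differentiates to $c\cdot h_a(1+\ln h)$, so the stationarity equations will be linear combinations of logarithms that can be exponentiated to polynomial (in fact multiplicative) relations among the quantities $3a+2b$, $2-3a-2b$, $a$, $b$, $\frac{2}{3}-a-b$, and $a+b-\frac13$. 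Setting $\partial f/\partial a = 0$ and $\partial f/\partial b = 0$ and exponentiating, I expect to get two equations of the form (product of some of these terms raised to integer powers) $= $ (a constant involving powers of $3$ and $4$); substituting the candidate $(a_0,b_0)=(\frac19,\frac13)$ should verify it is a solution (there $3a+2b = 1$, $2-3a-2b = 1$, $a = \frac19$, $b = \frac13$, $\frac23-a-b = \frac29$, $a+b-\frac13 = \frac19$), and then I would check the value $f(P_0) = 2\ln 3 - \frac43\ln 4$ by direct substitution. Uniqueness of the interior critical point follows either by solving the exponentiated system explicitly (it should reduce to a single-variable polynomial equation with a unique root in the relevant range) or by the strict concavity that the Hessian computation below will give globally — indeed, if $D^2 f$ is negative definite throughout the interior of $R$, then $f$ is strictly concave and has at most one critical point.

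Next I would handle the boundary of $R$, which consists of the segments $a = 0$, $b = 0$, $a+b = \frac13$, $a+b = \frac23$ (and the corner constraints $a,b\le\frac23$, though these are not active near the optimum). On each edge, $f$ restricts to a one-variable function; I would either show $f$ is smaller than $f(P_0)$ there by a direct one-variable analysis, or — more cleanly — observe that the gradient of $f$ on the interior points away from each boundary edge near that edge (i.e. the normal derivative is strictly positive pointing inward), so no boundary maximum can occur. One subtlety: on the edges $a+b=\frac13$ and $b=0$ some of the logarithmic terms blow up like $x\ln x \to 0$ with derivative $\to -\infty$, which actually helps, since it forces $f$ to decrease sharply as one approaches those edges from the interior, so the maximum cannot lie on them. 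Combining the interior critical point with the boundary exclusion gives that the global maximum on $R$ is attained uniquely at $P_0$.

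For the Hessian, I would compute the three second partials at $P_0$. Differentiating $c\cdot h\ln h$ twice gives $c\cdot h_a^2/h$ (since the mixed $\ln h$ terms have vanishing second derivative in the linear-$h$ case, as all the $h$'s here are affine in $(a,b)$), so $D^2 f$ is a sum of rank-one negative or positive matrices $\pm (c/h)\, \nabla h\, \nabla h^{\mathsf T}$, with the sign depending on whether the term enters $f$ with a plus or minus. Plugging in the values of $h$ at $P_0$ listed above, the entries become concrete rationals; I would then check $\det D^2 f(P_0) = 81$ by direct $2\times2$ determinant computation and confirm negative definiteness via the second-derivative test (trace $<0$ and determinant $>0$, i.e. $f_{aa}<0$ and $f_{aa}f_{bb} - f_{ab}^2 = 81 > 0$). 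The main obstacle I anticipate is the boundary analysis rather than the interior or the Hessian: the several edges each require their own argument, and making the "gradient points inward" claim rigorous (or doing the one-variable optimization on each edge) is where the bookkeeping is heaviest; the interior critical-point verification and the Hessian are essentially mechanical once the partial derivatives are written down correctly.
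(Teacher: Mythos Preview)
Your plan is essentially the paper's approach: compute the first partials, exponentiate to get multiplicative relations, locate the interior critical point, handle the boundary edge-by-edge, and then compute the Hessian at $P_0$ directly. The mechanical parts (verifying $P_0$ is critical, evaluating $f(P_0)$, and the $2\times 2$ Hessian calculation giving entries $-9,-6,-6,-13$ and determinant $81$) are exactly what the paper does.

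There is, however, one genuine gap in your hedged alternative for uniqueness: $f$ is \emph{not} strictly concave on the interior of $R$, so you cannot use global negative-definiteness of $D^2f$ to deduce a unique critical point. The two terms $(2-3a-2b)\ln(2-3a-2b)$ and $(3a+2b)\ln(3a+2b)$ enter $f$ with a plus sign and contribute positive-semidefinite pieces $\frac{1}{h}\nabla h\,\nabla h^{\mathsf T}$ to the Hessian; near the corner $(2/3,0)$ these dominate in certain directions. Concretely, at $(a,b)=(0.6,0.05)$ one computes $f_{aa}\approx -30$, $f_{ab}\approx -60$, $f_{bb}\approx -101$, so $f_{aa}f_{bb}-f_{ab}^2<0$ and $D^2f$ is indefinite there. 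You therefore must take the explicit route: the paper subtracts the two stationarity equations to get a quadratic in $b$ (namely $6a-9a^2=9ab+2b^2$), solves for $b=b^*(a)$, substitutes into the remaining equation, and checks that the only zero with $(a,b^*(a))$ in the interior of $R$ is $a=1/9$ (the other zeros $a=0$ and $a=2/3$ land on the boundary).

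On the boundary, your ``normal derivative points inward'' idea is actually cleaner than what the paper does and is valid on the interior of each edge (since $-\ln a$, $-\ln b$, $-\ln(a+b-\tfrac13)$, or $+2\ln(\tfrac23-a-b)$ forces the inward directional derivative to $+\infty$). Just be aware that at the corners several of these singular terms collide with opposite signs, so the derivative argument is indeterminate there; the paper simply evaluates $f$ at each corner and compares to $f(P_0)$, which is the safest way to close that case.
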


\begin{proof}[Proof of Lemma~\ref{global}.]
First we investigate the interior of $R$. To examine the stationary points of $f$, we start by computing the first partials of $f$:
$$\frac{\dd f}{\dd a} = - 3\ln(2-3a-2b) + 3\ln(3a+2b)- \ln a-\ln 4 + 2\ln\left(\frac{2}{3}-a-b\right) -\ln\left(a+b-\frac{1}{3}\right)$$ 
$$\text{and } \frac{\dd f}{\dd b} = \ln3 - \ln4 -2\ln(2-3a-2b) +2\ln(3a+2b) - \ln b+2\ln\left(\frac{2}{3}-a-b\right) -\ln\left(a+b-\frac{1}{3}\right).$$
By setting $\frac{\dd f}{\dd a} =0$, exponentiating both sides, and rearranging, we obtain
\begin{eqnarray}
(3a+2b)^3\left(\frac{2}{3}-a-b\right)^2 = 4a(2-3a-2b)^3\left(a+b-\frac{1}{3}\right).
\label{eq1}
\end{eqnarray}
By setting $\frac{\dd f}{\dd b} = \frac{\dd f}{\dd a}$, exponentiating both sides, and rearranging we obtain 
$3a (2-3a-2b) =b(3a+2b);$ this simplifies to \begin{eqnarray}6a-9a^2=9ab+2b^2.
\label{eq2}
\end{eqnarray} 
Using the quadratic equation to solve (\ref{eq2}) yields 

$$b = -\frac{9a}{4}\pm \frac{1}{4}\sqrt{9a^2 + 48a}.$$

Since the negative solution yields no point in $R$, let us define $b^*(a)= -\frac{9a}{4}+ \frac{1}{4}\sqrt{9a^2 + 48a}.$ Let $h(a) = (3a+2b^*(a))^3\left(\frac{2}{3}-a-b^*(a)\right)^2 - 4a(2-3a-2b^*(a))^3\left(a+b^*(a)-\frac{1}{3}\right).$ Now one can check that the only zeros of $h(a)$ with $0\le a \le \frac{2}{3}$ are $a=0$, $a=\frac{1}{9}$ and $a=\frac{2}{3}$. However, $b^*(0)=0$ and hence $a=0$ yields a common solution $(0,0)$ of (\ref{eq1}) and (\ref{eq2}) that is not in $R$. Thus the only common solutions of (\ref{eq1}) and (\ref{eq2}) that lie in $R$ are $\left(\frac{1}{9},\frac{1}{3}\right)$ and $\left(\frac{2}{3},0\right)$.  We compute that $f\left(\frac{1}{9},\frac{1}{3}\right) = 2\ln(3) - \frac{4}{3}\ln(4) \approx 0.348832$ and $f\left(\frac{2}{3},0\right)=\ln(3) - \frac{2}{3}\ln(4) = \frac{1}{2} \cdot f\left(\frac{1}{9},\frac{1}{3}\right) \approx 0.174416$.

Now we examine the boundary of $R$. As for the corner points, $f\left(\frac{2}{3},0\right) \approx 0.174416$ as above, $f\left(\frac{1}{3}, 0\right) = \ln(3) - \ln (4) \approx - 0.287768$, $f\left(0,\frac{2}{3}\right) = -\frac{1}{3} \ln(3) \approx - 0.366204$ and $f\left(0,\frac{1}{3}\right) = \frac{2}{3} \ln\left(\frac{4}{3}\right) \approx 0.191788$.

So we consider the interiors of the segments of the boundary. First consider the segment $a=0$ and $\frac{1}{3}\leq b \leq \frac{2}{3}$. Note that $\frac{d^2}{db^2} f(0,b) = \frac{2}{1-b} + \frac{1}{b} - \frac{2}{\frac{2}{3} - b} - \frac{1}{b-\frac{1}{3}} = \frac{b + 1}{b(1-b)}- \frac{2}{\frac{2}{3} - b} - \frac{1}{b-\frac{1}{3}}$, which for $\frac{1}{3} < b < \frac{2}{3}$ is at most $\frac{15}{2} - 6-3 < 0$. Hence $f(0,b)$ is concave when $\frac{1}{3} < b < \frac{2}{3}$ and so there is one local maximum in the interior of the segment. This point can be found by setting $\frac{d}{db} f(0,b) = 0$, which occurs at $b=b_1 \approx 0.393226$ at which point $f(0,b_1) \approx 0.253396$.   

Next consider the segment $a = \frac{1}{3} - b$ and $0\leq b \leq \frac{1}{3}$. Here $\frac{d^2}{db^2} f\left(\frac{1}{3} - b, b\right) = \frac{1}{1+b} + \frac{1}{1-b} - \frac{1}{b} - \frac{1}{\frac{1}{3} - b}= \frac{1}{1+b} + \frac{1}{1-b} - \frac{1}{b(1 - 3 b)}$, which for $0 < b < \frac{1}{3}$ is at most $1 + \frac{3}{2} - 12 < 0$. Hence $f\left(\frac{1}{3} - b, b\right)$ is concave when $0 < b < \frac{1}{3}$ and so there is one local maximum in the interior of the segment. This point can be found by setting $\frac{d}{db} f(\frac{1}{3} - b,b) = 0$, which occurs at $b=b_2 \approx 0.280776$ at which point $f\left(\frac{1}{3}-b_2,b_2\right) \approx 0.245950$ .  

Next consider the segment $a = \frac{2}{3} - b$ and $0\leq b \leq \frac{2}{3}$. Once can check that $f\left(\frac{2}{3}-b, b\right)$ is decreasing when $0< b < \frac{2}{3}$ since $\frac{d}{db} f\left(\frac{2}{3}-b, b\right) = \ln\left(\frac{2-3b}{2-b}\right) < 0$ for $0 < b < \frac{2}{3}$. Hence $f\left(\frac{2}{3}-b, b\right)$ is maximized for $0\leq b \leq \frac{2}{3}$ when $b=0$; as above $f\left(\frac{2}{3},0\right)\approx 0.174416$.

Next consider the segment $b=0$ and $\frac{1}{3}\leq a \leq \frac{2}{3}$. One can check that $f(a,0)$ is increasing when $\frac{1}{3} < a < \frac{2}{3}$ since $\frac{d}{da} f(a,0) = \ln\left(\frac{9a^2}{4(2-3a)(3a-1)}\right) > 0$ for $\frac{1}{3} < a < \frac{2}{3}$. Hence $f(a,0)$ is maximized for $\frac{1}{3}\leq a \leq \frac{2}{3}$ when $a= \frac{2}{3}$; as above, $f\left(\frac{2}{3},0\right) \approx 0.174416$.  

Therefore, the unique global maximum occurs at $P_0 = (a_0,b_0) = \left(\frac{1}{9},\frac{1}{3}\right)$. This proves the first assertion.  Note that this corresponds to setting $3a+2b = 2 - 3a-2b$ and therefore $3A+2B = 2n - 3A-2B =n$.  In other words, the number of points that are in-points of one signature and out-points of the other is equal to the number of points that are either in-points of both signatures or out-points of both signatures.  

To compute the Hessian, first we take second partials of $f$ and evaluate at $P_0$:
$$\frac{\dd^2 f}{\dd a^2} = \frac{9}{2-3a-2b}+\frac{9}{3a+2b} - \frac{1}{a} -\frac{2}{\frac{2}{3}-a-b} - \frac{1}{a+b-\frac{1}{3}},$$ 
$$\frac{\dd^2 f}{\dd a \dd b} = \frac{\dd^2 f}{\dd b \dd a} = \frac{6}{2-3a-2b}+\frac{6}{3a+2b} -\frac{2}{\frac{2}{3}-a-b} - \frac{1}{a+b-\frac{1}{3}},$$ 
$$\text{and }\  \frac{\dd^2 f}{\dd b^2} = \frac{4}{2-3a-2b}+\frac{4}{3a+2b} - \frac{1}{b} -\frac{2}{\frac{2}{3}-a-b} - \frac{1}{a+b-\frac{1}{3}}.$$

Note that $$\frac{\dd^2 f}{\dd a^2}\Bigr|_{\substack{P_0}} = -9,\ \ \frac{\dd^2 f}{\dd a  \dd b}\Bigr|_{\substack{P_0}} = \frac{\dd^2 f}{\dd b  \dd a}\Bigr|_{\substack{P_0}} = -6, \ \text{ and } \ \frac{\dd^2 f}{\dd b^2}\Bigr|_{\substack{P_0}} = -13.$$

\noindent Thus, Hessian matrix of $f$ evaluated at $P_0$ is
$$H := D^2f(P_0) = \begin{pmatrix}
-9 & -6\\
-6 & -13
\end{pmatrix}.$$
The determinant of $H$ is $81$ with eigenvalues $-11+2\sqrt{10}$ and $-11-2\sqrt{10}$; thus, $H$ is negative definite (this also implies that $P_0$ must be a local maximum).
\end{proof}


\subsection{Integrating}


Our next lemma, Lemma~\ref{lemma:EXY2}, approximates the sum in Lemma~\ref{Ex2AB} in terms of $\det D^2f(P_0), g(P_0)$ and $e^{f(P_0)n}$. This is done using a Taylor expansion around $P_0$ combined with multivariable Gaussian integrals to calculate $\bE[Y^2]$ more precisely as follows. The theory used in this proof has already been codified into a black box theorem by Greenhill, Janson and Ruci\'{n}ski~\cite{GJR}, specifically Theorem 2.3 and its special case Theorem 6.3. 

Indeed, given that by Lemma~\ref{global}, $f$ has a unique maximum at some interior point of $R$ and the Hessian $D^2f(P_0)$ is strictly negative definite, our Lemma~\ref{lemma:EXY2} follows easily from Theorem 6.3 of~\cite{GJR} by letting $\mathcal{L}=\mathbb{Z}^2, r=2$, $K$ be our set $R$ defined above, $\phi = f$, $\psi = g$ and $\ell_n = 0$ for each positive integer $n$. Nevertheless, for the sake of the reader, we include a proof of Lemma~\ref{lemma:EXY2} for completeness.

\begin{lemma}\label{lemma:EXY2}
$$\bE[Y^2] \sim \frac{2\pi}{\sqrt{|\det D^2f(P_0)|}} \cdot g(P_0) \cdot e^{f(P_0) n}.$$
\end{lemma}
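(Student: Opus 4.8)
The plan is to convert the discrete sum in Lemma~\ref{Ex2AB} into an integral, localize it near the unique maximizer $P_0$, and evaluate the resulting Gaussian integral. First I would deal with the error factor $S(a,b)$: since each application of Gosper's formula contributes a factor $1 + O(1/s)$ and the relevant arguments ($A$, $B$, $2n/3-A-B$, etc.) are either $\Omega(n)$ or bounded, one must be slightly careful. Near $P_0$ all of the arguments are linear in $n$ and bounded away from $0$, so there $S(a,b) = 1 + o(1)$ uniformly; away from $P_0$ the contribution is exponentially negligible by Lemma~\ref{global} (the value of $f$ there is strictly less than $f(P_0)$), and the total number of terms is only $O(n^2)$, so even the crude bound $S(a,b) = O(1)$ on those terms (which holds because the $1/(6n)$ shifts in $g$ keep every denominator positive) suffices to kill them against the exponential gap. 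This reduces the sum to $\sum_{(a,b)\in L} (1+o(1)) \frac{g(a,b)}{n} e^{f(a,b)n}$ with the dominant contribution coming from an $O(n^{-1/2+\epsilon})$-neighborhood of $P_0$.

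Next I would Taylor-expand $f$ around $P_0$. Since $P_0$ is an interior point and $\nabla f(P_0) = 0$, we have $f(a,b) = f(P_0) + \frac12 (v)^T H v + O(\|v\|^3)$ where $v = (a-a_0, b-b_0)$ and $H = D^2f(P_0)$ is the negative-definite Hessian from Lemma~\ref{global}. Writing $A = n a_0 + x$, $B = n b_0 + y$ with $x,y$ integers, the summand near $P_0$ is $\frac{g(P_0)}{n} e^{f(P_0)n} \exp\!\big(\tfrac{1}{2n} (x,y) H (x,y)^T + o(1)\big)$, using continuity of $g$ at $P_0$ (note $g(P_0)$ is well-defined and nonzero since $3a_0+2b_0 = 1$, $a_0 = 1/9 > 0$, $b_0 = 1/3 > 0$, $2/3 - a_0 - b_0 = 2/9 > 0$, $a_0+b_0-1/3 = 1/9 > 0$). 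Summing over the integer lattice $(x,y)$ and comparing with the corresponding Gaussian integral (a Riemann-sum approximation, valid because the summand varies slowly on the scale of the lattice spacing) gives $\sum_{x,y} \exp\big(\tfrac{1}{2n}(x,y)H(x,y)^T\big) \sim \iint_{\mathbb{R}^2} \exp\big(\tfrac{1}{2n}(u,v)H(u,v)^T\big)\, du\, dv = \frac{2\pi n}{\sqrt{|\det H|}}$, the standard multivariate Gaussian integral with covariance $n(-H)^{-1}$.

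Putting these together yields
\[
\bE[Y^2] \sim \frac{g(P_0)}{n} e^{f(P_0)n} \cdot \frac{2\pi n}{\sqrt{|\det D^2 f(P_0)|}} = \frac{2\pi}{\sqrt{|\det D^2 f(P_0)|}} \cdot g(P_0) \cdot e^{f(P_0)n},
\]
which is exactly the claimed asymptotic; alternatively one simply invokes Theorem 6.3 of~\cite{GJR} with the dictionary $\mathcal{L} = \mathbb{Z}^2$, $r = 2$, $K = R$, $\phi = f$, $\psi = g$, $\ell_n = 0$, whose hypotheses (continuity of $\phi,\psi$ on the compact $K$, a unique interior maximizer of $\phi$, strictly negative-definite Hessian there) are all furnished by Lemma~\ref{global}.

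The main obstacle is the rigorous justification of the two approximation steps done uniformly: (i) controlling the Gosper error factor $S(a,b)$ globally, including on the boundary of $L$ where some multinomial arguments may be small (here the $1/(6n)$-regularization built into $g$ and the exponential separation guaranteed by Lemma~\ref{global} do the work), and (ii) justifying that the lattice sum near $P_0$ is well-approximated by the Gaussian integral — i.e. bounding the tail contributions from $\|v\| \gg n^{-1/2}\log n$ (exponentially small) and the error from replacing the sum by the integral (negligible since the Gaussian is slowly varying on unit scale). Neither is deep, but both require the kind of careful uniform estimates that the black-box theorem of~\cite{GJR} was designed to package; the self-contained proof essentially reproduces that packaging in this special two-dimensional case.
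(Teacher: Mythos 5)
Your proof is correct and follows essentially the same route as the paper: start from the Gosper-corrected sum of Lemma~\ref{Ex2AB}, localize near the unique interior maximizer $P_0$ using the Taylor expansion with $\nabla f(P_0)=0$ and $H=D^2f(P_0)$ negative definite, replace the lattice sum by a Gaussian integral (your parameterization in integer offsets $x,y$ is equivalent to the paper's rescaling $x=(a-a_0)\sqrt{n}$, $y=(b-b_0)\sqrt{n}$, and both yield $\frac{2\pi n}{\sqrt{|\det H|}}$), and discard the tail over $L\setminus L'$ using the exponential separation from Lemma~\ref{global} together with the $O(1)$ bound on $S$, the polynomial bound on $g$, and the $O(n^2)$ term count. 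Your identification of the two technical burdens (uniform Gosper control, Riemann-sum-to-integral justification) and the fallback to Theorem 6.3 of~\cite{GJR} also mirror the paper.
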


\begin{proof}[Proof of Lemma~\ref{lemma:EXY2}.]
By Lemma~\ref{Ex2AB},
$$\bE[Y^2] = \sum_{(a,b) \in L} S(a,b) \cdot \frac{g(a,b)}{n} \cdot e^{f(a,b) n},$$
where $S(a,b)$ is the error factor arising from the applications of Gosper's formula.

As before, we denote the Hessian matrix of $f$ evaluated at $P_0$ by $H := D^2f(P_0)$. We denote the gradient vector of $f$ evaluated at $P_0$ by $D := D f(P_0)  =[0,0].$ We integrate near the maximum using a second-order Taylor series expansion. Let $[P-P_0]$ denote a row vector with components $[(a-a_0), (b-b_0)]$, and let $[P-P_0]^T$ be the transpose, a column vector.  This gives that $f(a,b)=f(P)$ near $P_0$ is

\begin{align*}
f(P) &= f(P_0) + D[P-P_0]^T+ \frac{1}{2}[P-P_0]H[P-P_0]^T + O\left(\|P-P_0 \|^3 \right)\\
 &= f(P_0)  + \frac{1}{2}[P-P_0]H[P-P_0]^T + O\left(\|P-P_0 \|^3 \right).
\end{align*}
By Taylor's Theorem, we note that the error is valid provided that $\|P-P_0 \| =o(1)$. 

Let $R^\prime = \left\{P \in R: \|P-P_0 \|\leq n^{-2/5}\right\}$ and $L^\prime = L\cap R^\prime$. Note that for all $P \in R^\prime$, $S(P)g(P) \sim g(P_0)$ because ${S\left(P\right)} \sim 1$ and $\|P-P_0 \|^3 \leq n^{-6/5}$. Thus,
\begin{align*}
\sum_{P \in L^{\prime}} S(P) \cdot \frac{g(P)}{n} \cdot e^{f(P) n} &\sim \frac{g(P_0)}{n} \cdot \sum_{P \in L^{\prime}} e^{f(P) n}\\
&\sim \frac{g(P_0)}{n} \cdot e^{f(P_0) n} \cdot \sum_{P \in L^{\prime}} \exp\left({\frac{1}{2}[P-P_0]H[P-P_0]^T n}\right) \cdot e^{n O\left(\|P-P_0 \|^3 \right)}\\
&\sim \frac{g(P_0)}{n} \cdot e^{f(P_0) n} \cdot \sum_{P \in L^{\prime}} \exp\left({\frac{1}{2}[P-P_0]H[P-P_0]^T n}\right),
\end{align*}

\noindent where the last part follows since $e^{n \cdot O\left(\|P-P_0 \|^3 \right)} = e^{n \cdot O(n^{-6/5})} = e^{O(n^{-1/5})}$ goes to $1$ as $n \rightarrow \infty$.

We note then that if we divide the sum by a factor of $n^2$, then this becomes a Riemann sum over $R^\prime$. That Riemann sum in turn approximates an integral as $n\rightarrow \infty$. Hence

\begin{align*}
\sum_{P \in L^{\prime}} \exp\left({\frac{1}{2}[P-P_0]H[P-P_0]^T n}\right)
&\sim 
n^2 \cdot \int \int_{P \in R^{\prime}} \exp\left({\frac{1}{2}[P-P_0]H[P-P_0]^T n}\right) dP.
\end{align*}

Now we change variables by letting $x=(a-a_0)\sqrt{n}$ and $y=(b-b_0)\sqrt{n}$ for $P=(a,b)$. Note that the region of $x,y$ corresponding to $R^\prime$ approaches the whole real plane since the original side length of the box $R'$ is $n^{-2/5}$ and hence the scaled region has side length $n^{-2/5}\sqrt{n}=n^{1/10}$ which goes to $\infty$ as $n \rightarrow \infty$. Thus, this change of variable transforms the integral into 

$$ \int \int_{P \in R^{\prime}} \exp\left({\frac{1}{2}[P-P_0]H[P-P_0]^T n}\right) dP \sim \frac{1}{n} \cdot \int \int_{\mathbb{R}^2} \exp\left([x, \  y]\frac{H}{2}[x, \  y]^T\right) dx dy,$$
where $H$ is the Hessian matrix of $f$ evaluated at $P_0$. Diagonalizing and using the Gaussian integral, that is $\int_{-\infty}^{\infty} e^{-x^2} dx = \sqrt{\pi}$, we see that the integral evaluates to

$$ \frac{1}{n} \cdot \sqrt{\frac{\pi^2}{|\det \frac{H}{2}|} } = \frac{1}{n} \cdot \sqrt{\frac{4\cdot\pi^2}{|\det H|} }  = \frac{2\pi}{n\sqrt{|\det H|}}.$$
Therefore,
$$\sum_{P \in L^{\prime}} S(P) \cdot \frac{g(P)}{n} \cdot e^{f(P) n} \sim \frac{2\pi}{\sqrt{|\det H|}} \cdot g(P_0) \cdot e^{f(P_0) n}.$$

Because $H$ is negative definite, the value of $f$ on the boundary of $R^{\prime}$  is $f(P_0) - \Omega(n^{-2})$. However, $f$ is independent of $n$ and $P_0$ is a global maximum. Thus, 
$$\max_{P \in R \backslash R^{\prime}} f(P) = f(P_0) - \Omega(n^{-2}).$$
Observe that
\begin{align*}\bE[Y^2] &= \sum_{(a,b) \in L} S(a,b) \cdot \frac{g(a,b)}{n} \cdot e^{f(a,b) n}\\ &= \sum_{(a,b) \in L \backslash L^{\prime}} S(a,b) \cdot \frac{g(a,b)}{n} \cdot e^{f(a,b) n}+\sum_{(a,b) \in L^{\prime}} S(a,b) \cdot \frac{g(a,b)}{n} \cdot e^{f(a,b) n}\\
&\sim \sum_{(a,b) \in R \backslash R^{\prime}} S(a,b) \cdot \frac{g(a,b)}{n} \cdot e^{f(a,b) n} + \frac{2\pi}{\sqrt{|\det H|}} \cdot g(P_0) \cdot e^{f(P_0) n}.
\end{align*}

Now consider $P=(a,b)\in L\backslash L^\prime$. Since $L\backslash L^\prime \subseteq R\backslash R^\prime$, we have that $$e^{f(a,b)n} = e^{f(P_0)n} \cdot \exp\left(-\Omega\left(n^{1/2}\right)\right).$$ Yet $S(a,b) = O(1)$ and $g(a,b) = O(n^{5/2})$ as each of the terms in the denominator of $g$ are $O(n)$. Thus for each ${(a,b) \in L \backslash L^{\prime}}$, we see that $$S(a,b) \cdot \frac{g(a,b)}{n} \cdot e^{f(a,b) n} = e^{f(P_0)n} \cdot \exp\left(-\Omega\left(n^{1/2}\right)\right).$$

Note that as there are only a polynomial number of points in $L\backslash L^\prime$, namely at most $n^2$, the sum over points in $L\backslash L^\prime$ is also $e^{f(P_0)n} \cdot \exp\left(-\Omega\left(n^{1/2}\right)\right)$.  Therefore, $\bE[Y^2] \sim \frac{2\pi}{\sqrt{|\det H|}} \cdot g(P_0) \cdot e^{f(P_0) n},$ as desired.
\end{proof}

We are ready to prove Lemma~\ref{Ex2}.

\begin{proof}[Proof of Lemma~\ref{Ex2}.]
By Lemma~\ref{lemma:EXY2},
$$\bE[Y^2] \sim \frac{2\pi}{\sqrt{|\det D^2f(P_0)|}} \cdot g(P_0) \cdot e^{f(P_0) n}.$$
Note that $\det D^2f(P_0) = 81$ by Lemma~\ref{global}. Moreover, $$g(P_0) = \frac{1}{2\pi} \cdot \sqrt{\frac{1\cdot 1}{2\cdot\frac{1}{3}\cdot\frac{1}{9}\cdot \left(\frac{2}{9}\right)^2\cdot\frac{1}{9}} } = \frac{1}{2\pi} \cdot \sqrt{\frac{3 \cdot 9^4}{8}} = \frac{81}{4 \pi} \cdot \sqrt{\frac{3}{2}}.$$
Finally, $f(P_0) = 2\ln(3) - \frac{4}{3}\ln(4)$. Hence 
$$\bE[Y^2] \sim  \frac{2\pi}{9} \cdot g(P_0) \cdot e^{f(P_0)n} = \frac{2\pi}{9}\cdot \frac{81}{4 \pi}\cdot \sqrt{\frac{3}{2}} \cdot \left(\frac{27}{16}\right)^{2n/3} = \sqrt{\frac{3}{2}} \cdot \frac{9}{2} \left(\frac{27}{16}\right)^{2n/3}.$$ 
Therefore, $\frac{\bE[Y^2]}{\bE[Y]^2} \sim \sqrt{\frac{3}{2}}.$
\end{proof}

\section{Joint Factorial Moments}\label{falling}

%
%
%



In this section, 
we prove Lemma~\ref{Ratio}.  By definition,
\[
\bE[YX_{j}]  = \frac{1}{M(4n)}\sum_{{j}\text{-cycle } C}  (\#\text{ orientations of cycle } C)\cdot  (\#\text{ extensions of orientations of } C) .
\] 

Note that this is equivalent to
\[
\bE[YX_{j}]  = \frac{1}{M(4n)}\sum_{ \text{oriented } {j}\text{-cycle } C}  \#\text{ extensions of orientations of } C .
\] 

By counting how many configurations extend such oriented cycles, we will prove the following.

\begin{lemma}\label{JointFormula}
The number of oriented cycles with $s$ sinks and $s$ sources is
$$\frac{(n)_{j}}{j} \binom{j}{2s} (4\cdot 3)^{j},$$
\noindent
while the number of extensions to $(3,0)$-orientations for any oriented cycle is
$$ \binom{n-j}{\frac{2n}{3}-j+s} 4^{\frac{2n}{3} - j+s} \cdot 2^s \left(2n-j\right)!$$
\noindent
Therefore, we have that
$$\bE[YX_{j}] = \frac{(n)_{j}}{{M(4n) \cdot j}} 4^{2n/3} 3^{j} (2n-j)! \sum_{s=0}^{\left\lfloor {j}/2\right\rfloor} \binom{{j}}{2s} \binom{n-{j}}{\frac{2n}{3}-{j}+s}2^{3s}.$$
\end{lemma}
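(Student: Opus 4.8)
The plan is to establish the three displayed formulas in Lemma~\ref{JointFormula} by a direct counting argument that mirrors the signature-counting approach used in the proofs of Lemma~\ref{Ex} and Lemma~\ref{FixAB}, and then to assemble them into the stated expression for $\bE[YX_j]$ using the reformulation
\[
\bE[YX_{j}] = \frac{1}{M(4n)}\sum_{\text{oriented } j\text{-cycle } C} \#\{\text{configurations extending the orientation of } C\}
\]
recorded just before the lemma. The key point is that an oriented $j$-cycle in the configuration model is a choice of $j$ distinct cells arranged cyclically together with a choice of which two of the four points in each cell are used by the cycle and in which order, plus an orientation of each of the $j$ edges; a configuration ``extends'' such an oriented structure if it contains those $j$ matched pairs and, moreover, the induced partial orientation (each cell on the cycle already has some out-points prescribed by the cycle edges leaving it) extends to a global $(3,0)$-orientation.

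First I would count oriented $j$-cycles with exactly $s$ sinks and $s$ sources (note sinks and sources must be equal in number along a cycle, and the cyclic orientation pattern has $2s$ ``turning points''). Choosing the underlying ordered-up-to-rotation-and-reflection set of cells and the point-pairs in each cell contributes the factor $\frac{(n)_j}{j}(4\cdot 3)^j$: there are $(n)_j$ ways to linearly order $j$ cells, divided by $j$ for rotations (I would either absorb the factor-of-$2$ reflection symmetry into the edge-orientation bookkeeping or note the standard convention being used), and $4\cdot 3 = 12$ ways per cell to pick an ordered pair of its points for the two incident cycle edges. The factor $\binom{j}{2s}$ counts the cyclic sequences of edge-orientations with exactly $2s$ sign changes (equivalently $s$ local maxima and $s$ local minima around the cycle): choosing the $2s$ positions where the orientation switches determines the pattern up to a global flip, and one checks the bookkeeping is consistent with the $\frac{(n)_j}{j}$ normalization.

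Next I would count the extensions. Fixing an oriented $j$-cycle $C$ with $s$ sinks and $s$ sources, I would count pairs consisting of a signature $\sigma$ compatible with $C$ (i.e. whose in/out-point assignment agrees with the partial orientation forced by $C$ on the $j$ cells it meets) together with a configuration matching $\sigma$'s remaining in-points to its remaining out-points. A cell of $C$ that is a ``source'' of the cycle has both its cycle-points as out-points, so it must be a center using one of its other two points as special; a ``sink'' cell has both cycle-points as in-points, and the two ``path-through'' cells each have one cycle-point in and one out. Carefully accounting for how many of the $4$ points in each cycle-cell are already pinned down, the remaining $\frac{2n}{3}-j+s$ centers (among the $n-j$ off-cycle cells) are chosen in $\binom{n-j}{\frac{2n}{3}-j+s}$ ways, with $4^{\frac{2n}{3}-j+s}$ choices of special points there and a factor $2^s$ for the choice of special point in each of the $s$ source cells on the cycle; finally the leftover $2n-j$ free in-points match to the $2n-j$ free out-points in $(2n-j)!$ ways. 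This yields the second displayed formula.

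Assembling: summing $\#\text{extensions}(s)$ weighted by $\#\text{oriented cycles with }s\text{ sinks/sources}$ over $s=0,\dots,\lfloor j/2\rfloor$, dividing by $M(4n)$, and collecting the $s$-independent factors $\frac{(n)_j}{j}$, $4^{2n/3}$, $3^j$ (from $(4\cdot3)^j \cdot 4^{-j}\cdot 4^{\text{correction}}$ — I would track the powers of $4$ carefully here), and $(2n-j)!$ out front gives exactly
\[
\bE[YX_{j}] = \frac{(n)_{j}}{M(4n)\cdot j}\, 4^{2n/3}\, 3^{j}\,(2n-j)! \sum_{s=0}^{\lfloor j/2\rfloor} \binom{j}{2s}\binom{n-j}{\tfrac{2n}{3}-j+s} 2^{3s}.
\]
The main obstacle I anticipate is the exact power-of-$4$ and power-of-$2$ bookkeeping: one must be scrupulous about which of the four points in each cycle-cell are prescribed versus free, how the $12 = 4\cdot 3$ per-cell factor interacts with the $4^{\bullet}$ signature factor and the $2^s$ factor, and how the reflection symmetry of an unoriented cycle is or is not already cancelled before orientations are added — getting any of these off by a constant or by a factor depending on $s$ would break the later asymptotic matching with $\lambda_j(1+(-1/3)^j)$. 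A secondary check worth doing is confirming the formula is consistent at the boundary $s=0$ (no turning points, i.e. the cycle is ``coherently oriented'') and, as a sanity check independent of Lemma~\ref{Ex}, that the $j\to$ structure of the sum reproduces $\bE[Y]$-type factors.
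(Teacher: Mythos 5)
Your proposal is correct and follows essentially the same route as the paper: reformulate $\bE[YX_j]$ as a sum over oriented $j$-cycles, count oriented cycles with a given number $s$ of sources/sinks via $\binom{n}{j}\cdot\frac{(j-1)!}{2}\cdot 2\binom{j}{2s}\cdot(4\cdot3)^j$, then count extensions by observing that sinks must be leaves while sources and pass-through vertices must be centers (with only sources contributing a free factor of $2$ for the special-point choice). The bookkeeping concerns you flag (the reflection factor of $2$, the powers of $4$ combining to $4^{2n/3}3^j2^{3s}$) all resolve exactly as you anticipate, matching the paper's computation.
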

\begin{proof}

Any oriented cycle must have the same number of sources (vertices with out-degree equal to 2) and sinks (vertices with out-degree equal to 0); an oriented cycle of length $j$ can have $s$ sources, $s$ sinks and $j-2s$ other vertices for some $0 \leq s \leq \left\lfloor \frac{j}{2}\right\rfloor$. 
The number of oriented cycles of length $j$ with exactly $s$ sources and $s$ sinks is
$$\frac{(n)_{j}}{j} \binom{j}{2s} (4\cdot 3)^{j}.$$

To see this, choose a set of $j$ vertices ($\binom{n}{j}$ ways). The number of cyclic permutations of $j$ entries is $\frac{(j-1)!}{2}$, where we divide by $2$ for reversing the cycle and we can pick sources and sinks in $2\binom{j}{2s}$ ways (the sources and sinks must alternate around the cycle), since once the sources and sinks
have been chosen, then the orientation of the whole cycle is determined. Finally, every vertex in the cycle needs to pick two of its 4 points in ordered fashion from the configuration for endpoints of edges in the cycle.
 
Let $C$ be a cycle of length $j$; vertices that are sinks in $C$ cannot have out-degree 3 and so are not centers. All other vertices of $C$ must be centers.  Thus the number of leaves in $C$ is $s$ and the number of centers in $C$ is $j-s$.  The number of extensions is then given by first completing the signature. 

To this end, we choose $\frac{2n}{3} - (j-s)$ of the $n-j$ vertices outside of $C$ to be centers; this can be done in $\binom{n-j}{\frac{2n}{3}-j+s}$ ways. Then we choose a special point for each such center; this can be done in $4^{\frac{2n}{3} - j+s}$ ways. For each source, for the non-cycle edges we must orient one edge out and one edge; this can be done in $2^s$ ways. The points that are ends of edges of the cycle are already matched. To extend this to a $(3,0)$-orientation, we match the remaining in-points and out-points, of which there are $2n-j$ of each; hence this can be done in $(2n-j)!$ ways. Combining all of these together, we find that the number of extensions to $(3,0)$-orientations for any oriented cycle is
$$\binom{n-j}{\frac{2n}{3}-j+s} 4^{\frac{2n}{3} - j+s} \cdot 2^s \left(2n-j\right)!,$$

\noindent
and therefore the whole expression is
\begin{align*}
\bE[YX_{j}] &= \frac{1}{M(4n)}\sum_{s=0}^{\left\lfloor {j}/2\right\rfloor} \frac{(n)_{j}}{{j}} \binom{{j}}{2s} (4\cdot 3)^{j} \binom{n-{j}}{\frac{2n}{3}-{j}+s} 4^{\frac{2n}{3} - {j}+s} \cdot 2^s (2n-j)!\\
&= \frac{(n)_{j}}{{M(4n) \cdot j}} 4^{2n/3} 3^{j} (2n-j)! \sum_{s=0}^{\left\lfloor {j}/2\right\rfloor} \binom{{j}}{2s} \binom{n-{j}}{\frac{2n}{3}-{j}+s}2^{3s}.
\end{align*}

\end{proof}

Recall that
$$\bE[Y]= \frac{{{n}\choose{2n/3}} 4^{2n/3}(2n)!}{M(4n)}.$$ 
We are now ready to prove Lemma~\ref{Ratio} as follows.\\

\begin{proof}[Proof of Lemma~\ref{Ratio}.]
In the computation of $\frac{\bE[YX_{j}]}{\bE[Y]}$, we use the following approximation where $y$ is a constant and $x$ goes to infinity,
\[
\frac{x!}{(x-y)!} 
       \sim 
 \frac{\sqrt{2\pi x} \left(\frac{x}{e}\right)^x}{\sqrt{2\pi (x-y)} \left(\frac{x-y}{e}\right)^{x-y}} 
      \sim
\left(\frac{x}{e}\right)^y \cdot \left( \frac{x}{x-y}\right)^{x-y} 
     =
   \left(\frac{x}{e}\right)^y \cdot \left( 1+\frac{y}{x-y}\right)^{x-y}
     \sim
     x^y.
\]

Thus,
\begin{align*}\frac{\bE[YX_{j}]}{\bE[Y]} &= \frac{n!}{j \cdot (n-j)!} 3^{j} \sum_{s=0}^{\left\lfloor {j}/2 \right\rfloor} \binom{{j}}{2s} \frac{{{n-{j}}\choose{\frac{2n}{3}-{j}+s}}}{{n \choose 2n/3}}\frac{(2n-j)!}{(2n)!}2^{3s}\\ &= \frac{3^j}{j} \sum_{s=0}^{\left\lfloor {j}/2\right\rfloor} \binom{{j}}{2s} \frac{\left(\frac{2n}{3}\right)!}{\left(\frac{2n}{3}-{j}+s\right)!}\frac{\left(\frac{n}{3}\right)!}{\left(\frac{n}{3}-s\right)!}\frac{(2n-j)!}{(2n)!}2^{3s}\\
&\sim \frac{3^j}{j} \sum_{s=0}^{\left\lfloor {j}/2\right\rfloor} \binom{{j}}{2s} \left(\frac{2n}{3}\right)^{j-s} \left(\frac{n}{3}\right)^s\frac{2^{3s}}{(2n)^j}= \frac{1}{j} \sum_{s=0}^{\left\lfloor {j}/2\right\rfloor} \binom{{j}}{2s} 2^{2s}.
\end{align*}

Note that $\binom{{j}}{2s}$ is the coefficient of $x^{2s}$ in $q(x):= (1+x)^j$, so

\begin{align*}\frac{\bE[YX_{j}]}{\bE[Y]} &\sim \frac{1}{j} \sum_{s=0}^{\left\lfloor {j}/2\right\rfloor} \binom{{j}}{2s} 2^{2s}= \frac{1}{j}\cdot \frac{\left(q(2) + q(-2) \right)}{2} = \frac{1}{2\cdot j}\left(3^j + (-1)^j \right)\\
&= \frac{3^j}{2\cdot j}\left(1 + \left(-\frac{1}{3}\right)^j \right) = \lambda_j \left(1 + \left( - \frac{1}{3}\right)^j\right).
\end{align*}
\end{proof}
\section{Acknowledgments}
The authors would like to thank Bernard Lidick\'y for useful discussion and helpful comments. We would also like to thank the anonymous referees for helping to improve the clarity of our exposition.
%
%
%

\end{document}